\newtheorem{theorem}{Theorem}[section]
\newtheorem{question}[theorem]{Question}
\newtheorem{lemma}[theorem]{Lemma}
\newtheorem{proposition}[theorem]{Proposition}
\newtheorem{corollary}[theorem]{Corollary}
\theoremstyle{definition}
\newtheorem{definition}[theorem]{Definition}
\newtheorem{fact}[theorem]{Fact}
\newtheorem{example}[theorem]{Example}
\begin{document}
\author[M.  Sayedsadeghi,  M. Nasernejad, and A. A. Qureshi]{Mirsadegh ~ Sayedsadeghi$^{1}$,  Mehrdad ~Nasernejad$^{2,*}$, and Ayesha Asloob Qureshi$^{3}$}
\title[On the embedded associated primes of monomial ideals]{On the embedded associated primes of monomial ideals}
\subjclass[2010]{13B25, 13F20, 05E40.} 
\keywords { Associated primes, Normally torsion-free ideals, Strong persistence property, K$\mathrm{\ddot{o}}$nig ideals, Corner-elements.}
\thanks{$^*$Corresponding author}

\thanks{Mirsadegh  Sayedsadeghi  was supported by the grant of Payame Noor University of Iran.}

\thanks{Mehrdad Nasernejad  was in part supported by a grant from IPM (No. 991300120).}

\thanks{E-mail addresses:  msayedsadeghi@gmail.com, m\_{nasernejad@yahoo.com}, and aqureshi@sabanciuniv.edu}  
\maketitle
 
\begin{center}
{\it
$^1$Department of Mathematics, Faculty of Science, Payame Noor \\ University (PNU), P.O. Box, 19395-3697, Tehran, Iran\\

$^2$School of Mathematics, Institute for Research  in  Fundamental \\
Sciences (IPM),  P.O. Box 19395-5746, Tehran, Iran\\
 $^3$Sabanc\i\;University, Faculty of Engineering and Natural Sciences, \\
Orta Mahalle, Tuzla 34956, Istanbul, Turkey
}
\end{center}

\vspace{0.4cm}

\begin{abstract}
 Let $I$  be a square-free  monomial ideal in a  polynomial ring   $R=K[x_1,\ldots, x_n]$  over a field $K$, $\mathfrak{m}=(x_1, \ldots, x_n)$ be  the  graded maximal ideal of $R$,   and  $\{u_1, \ldots, u_{\beta_1(I)}\}$  be a maximal independent set of minimal generators of $I$ such that  $\mathfrak{m}\setminus x_i \notin \mathrm{Ass}(R/(I\setminus x_i)^t)$ for all   $x_i\mid \prod_{i=1}^{\beta_1(I)}u_i$ and some positive integer $t$, where $I\setminus x_i$ denotes the  deletion of $I$ at $x_i$ and $\beta_1(I)$  denotes the maximum cardinality of an independent set in $I$.
In this paper, we prove that if  $\mathfrak{m}\in \mathrm{Ass}(R/I^t)$, then $t\geq \beta_1(I)+1$.  As an application, we verify that under certain conditions, every unmixed K$\mathrm{\ddot{o}}$nig ideal is normally torsion-free, and so has the strong persistence property. 
In addition, we show that every square-free transversal polymatroidal ideal is normally torsion-free. 
Next, we state some results on the corner-elements of monomial ideals. In particular, we prove that if  $I$ is  a monomial ideal in a polynomial ring $R=K[x_1, \ldots, x_n]$ over a field $K$ and $z$ is  an $I^t$-corner-element  for some positive integer $t$  such that  $\mathfrak{m}\setminus x_i \notin \mathrm{Ass}(I\setminus x_i)^t$ for some $1\leq i \leq n$, then  $x_i$ divides $z$.
\end{abstract}
\vspace{0.4cm}

%+++++++++++++++++++++++++++++++++++++++++++++
%+++++++++++++++++++++++++++++++++++++++++++++
\section{Introduction}
Suppose that   $R$ is    a commutative Noetherian ring,  $I$ is  an ideal of $R$, and $\mathrm{Ass}_R(R/I)$ denotes  the set of all prime ideals associated to  $I$.
  Brodmann \cite{BR} proved  that the sequence $\{\mathrm{Ass}_R(R/I^k)\}_{k \geq 1}$ of associated prime ideals is stationary  for large $k$, i.e., there exists a positive integer $k_0$ such that $\mathrm{Ass}_R(R/I^k)=\mathrm{Ass}_R(R/I^{k_0})$ for all $k\geq k_0$. The  minimum  such $k_0$ is called the
{\it index of stability  of} $I$ and $\mathrm{Ass}_R(R/I^{k_0})$ is called the {\it stable set of associated prime ideals of} $I$, which is denoted by $\mathrm{Ass}^{\infty }(I)$, see \cite{KHN} for more details.      

Many   questions   arise in the context of Brodmann's result.  
Recall that if  $I$ is  an ideal in a commutative Noetherian ring $R$, then  $I$ is said to have the {\it persistence property} if $\mathrm{Ass}(R/I^k)\subseteq \mathrm{Ass}(R/I^{k+1})$ 
for all positive integers $k$. Moreover, an ideal $I$ satisfies the {\it strong persistence property} if $(I^{k+1}: I)=I^k$ for all positive integers $k$,  for more details refer to \cite{ N2, NKA}.  Furthermore,  we say that $I$  has the {\it symbolic strong persistence property}  if 
$(I^{(k+1)}: I^{(1)})=I^{(k)}$ for all $k$, where $I^{(k)}$ denotes the  $k$-th symbolic  power  of $I$, cf. \cite{RT}. 
An ideal $I$ is called {\it normally torsion-free} if $\mathrm{Ass}(R/I^k) \subseteq \mathrm{Ass}(R/I)$ for all $k$, see \cite[Definition 1.4.5]{HH1}.  In particular,   Kaiser, Stehl$\mathrm{\acute{i}}$k, 
and $\mathrm{\check{S}}$krekovski \cite{KSS} have shown that not all square-free monomial ideals have the persistence property. However,  by applying combinatorial methods, it has  been shown  that many large families of square-free monomial ideals  satisfy the persistence property and the strong persistence property. It has been shown that the persistence property and also the strong persistence property holds  for edge ideals of finite  simple graphs \cite {MMV}, edge ideals of  finite   graphs with loops \cite{RT},  and  polymatroidal ideals \cite{HRV}. Furthermore,  according to \cite{FHV2},   cover ideals of perfect graphs have the persistence property. In addition, a  few  examples of normally torsion-free monomial ideals appear from graph theory, see \cite{ N3, KHN1}.  In \cite{SVV}, it has been  already proved that  a finite simple graph $G$ is bipartite if and only if its edge ideal is normally torsion-free. Moreover, by \cite{GRV}, it is well-known that the cover ideals of bipartite graphs are normally torsion-free. In addition, in \cite{HRV},  it has been verified  that every  transversal polymatroidal ideal  is normally torsion-free.  One of our motivations in this paper is to give a large class of square-free monomial ideals which satisfy  normality, normally torsion-freeness, and  
the (symbolic)  (strong)  persistence property. 
\par 
Now, let $I$ be a monomial ideal in a polynomial ring $R=K[x_1, \ldots, x_n]$, and  $\mathfrak{m}=(x_1, \ldots, x_n)$ be the  graded maximal ideal of $R$. 
One motivating  question in this field is the existence of the graded maximal ideal in the set of associated primes. It should be noted that  little is known on this subject in literature,  see 
\cite[Lemma 3.1]{CMS}, \cite[Corollary 3.6]{HM}, and  
\cite[Theorem 2.7]{NR} for more details. As another motivation of  this paper,  we give  some results in this theme. 

This paper is organized as follows: In Section 2 we establish notation and  definitions which  appear  throughout the paper.
In Section 3,  we concentrate on the embedded associated primes of monomial ideals. More precisely, 
let $I$  be a square-free  monomial ideal in a  polynomial ring   $R=K[x_1,\ldots, x_n]$  over a field $K$, $\mathfrak{m}=(x_1, \ldots, x_n)$ be  the  graded maximal ideal of $R$,   and 
 $\{u_1, \ldots, u_{\beta_1(I)}\}$  be a maximal independent set of minimal generators of $I$ such that  $\mathfrak{m}\setminus x_i \notin \mathrm{Ass}(R/(I\setminus x_i)^t)$ for all   $x_i\mid \prod_{i=1}^{\beta_1(I)}u_i$ and some positive integer $t$, where $I\setminus x_i$ denotes the  deletion of $I$ at $x_i$ and $\beta_1(I)$  denotes the maximum cardinality of an independent set in $I$ (see  \cite[page 303]{HM} for  definitions of deletion and independent set).
 If $\mathfrak{m}\in \mathrm{Ass}(R/I^t)$, then $t\geq \beta_1(I)+1$, cf. Corollary \ref{Main.Corollary}. As an application of Corollary \ref{Main.Corollary}, we verify that under certain conditions,  every unmixed K$\mathrm{\ddot{o}}$nig ideal is normally torsion-free, and so has the strong persistence property, see Theorem \ref{Application}. In addition,   Theorem \ref{Th.Main2} implies  that under certain conditions a square-free monomial ideal is normally torsion-free. 
We conclude  by establishing the claim that 
every square-free transversal polymatroidal ideal is normally torsion-free, see Theorem \ref{Application2}.

Section 4 is devoted to the investigation of corner-elements of monomial ideals. In particular,  in  Corollary \ref{Main.Corollary2}, it is proved  that if 
 $I$ is  a monomial ideal in a polynomial ring $R=K[x_1, \ldots, x_n]$ over a field $K$ and $z$ is  an $I^t$-corner-element  for some positive integer $t$  such that  $\mathfrak{m}\setminus x_i \notin \mathrm{Ass}(I\setminus x_i)^t$ for some $1\leq i \leq n$, then  $x_i$ divides $z$.

%+++++++++++++++++++++++++++++++++++++++++++++
%+++++++++++++++++++++++++++++++++++++++++++++
\section{Preliminaries}

In this section, we introduce notation  and definitions which will be used in the rest of this paper. \par 

Assume that $R$ is  a commutative Noetherian ring and $I$ is  an ideal of $R$. A prime ideal $\mathfrak{p}\subset  R$ is an {\it associated prime} of $I$ if there exists an element $h$ in $R$ such that $\mathfrak{p}=(I:_R h)$, where $(I:_Rh)=\{r\in R |~  rh\in I\}$. The  {\it set of associated primes} of $I$, denoted by  $\mathrm{Ass}_R(R/I)$, is the set of all prime ideals associated to  $I$. It is known  that if $I$ is an ideal in  a commutative Noetherian ring $R$, then  $\mathrm{Ass}_R(R/I)$ is a finite set, cf.  \cite[Theorem 6.5]{Mat}.\par

A presentation of an ideal $I$ is an intersection $I=\cap_{i=1}^k\mathfrak{q}_i$, where  each $\mathfrak{q}_i$ is a primary ideal,  is called a {\it primary decomposition} of $I$. Let $\mathrm{Ass}_R(R/\mathfrak{q}_i)=\mathfrak{p}_i$ for each $i=1, \ldots, k$. The primary decomposition is called {\it minimal  primary decomposition} if none of the $\mathfrak{q}_i$ can be omitted in this intersection, and also $\mathfrak{p}_i \neq \mathfrak{p}_j$ for all $i\neq j$.
If $I=\cap_{i=1}^k\mathfrak{q}_i$ is a minimal  primary decomposition of $I$, then  $\mathrm{Ass}_R(R/I)=\{\mathfrak{p}_1, \ldots, \mathfrak{p}_k\}$. The minimal members of 
$\mathrm{Ass}_R(R/I)$ are called the {\it minimal} primes of $I$, and $\mathrm{Min}(I)$ denotes the set of minimal prime ideals of $I$. Also, the associated primes of $I$ which are not minimal are called the {\it embedded} primes of $I$. 
In particular,  if $I$ is a square-free monomial ideal, then $\mathrm{Ass}_R(R/I)=\mathrm{Min}(I)$,  refer to 
 \cite[Corollary 1.3.6]{HH1}. \par

In addition, the {\it $k$-th symbolic power} of $I$, denoted by $I^{(k)}$, is the intersection of those primary components of $I^k$ which belong to the minimal prime ideals of $I$, cf. \cite[page 14]{HH1}. \par 

Let $I$ be a square-free monomial ideal and $\Gamma \subseteq \mathcal{G}(I)$, where $\mathcal{G}(I)$ denotes the unique minimal set of monomial generators of the  monomial ideal $I$. We say that $\Gamma$ is an {\it independent} set in $I$ if $\mathrm{gcd}(f,g)=1$ for each $f,g\in \Gamma$ with $f\neq g$. We denote the maximum cardinality of an independent set in $I$ by $\beta_1(I)$. 
Furthermore, if $I$ is a monomial ideal, then the  {\it deletion} of $I$ at $x_i$ with $1\leq i \leq n$, denoted by $I\setminus x_i$, is obtained by setting $x_i=0$ in every minimal generator of $I$, that is, we delete every minimal generator such as $u\in \mathcal{G}(I)$ with $x_i\mid u$, see  \cite[page 303]{HM} for  definitions of deletion and independent set.\par

Recall from \cite[Definition 6.1.5]{V1} that if  $u=x_{1}^{a_{1}}\cdots x_{n}^{a_{n}}$ is  a
monomial in a polynomial ring $R=K[x_{1},\ldots ,x_{n}]$ over a field $K$, then 
the \textit{support} of $u$ is given by $\mathrm{supp}(u):=\{x_{i}|~a_{i}>0%
\} $.\par 

Notice  that a {\it  K$\ddot{o}$nig} ideal is a square-free monomial ideal $I$ with  $\mathcal{G}(I)=\{u_1, \ldots, u_r\}$ such that the maximum number of pairwise disjoint monomials $u_1, \ldots, u_r$ is equal to the height of $I$. Note that edge ideals of bipartite graphs form a large class of K$\mathrm{\ddot{o}}$nig ideals. \par 

Let $R$ be a unitary commutative ring and $I$ an ideal in $R$. An element $f\in R$ is {\it integral} over $I$, if there exists an equation 
 $$f^k+c_1f^{k-1}+\cdots +c_{k-1}f+c_k=0 ~~\mathrm{with} ~~ c_i\in I^i.$$
 The set of elements $\overline{I}$ in $R$ which are integral over $I$ is the 
 {\it integral closure} of $I$. The ideal $I$ is {\it integrally closed}, if $I=\overline{I}$, and $I$ is {\it normal} if all powers of $I$ are integrally closed, refer to \cite{ANR} for more information. \par 

The following   facts  will be used in the rest of this paper:
\begin{fact} \label{fact1}\cite[Exercise 6.1.23]{V1} 
If $I$, $J$, $L$ are monomial ideals, then  the following equalities hold:
\begin{itemize}
\item[(i)]  $I\cap (J+L)=(I \cap J) + (I \cap L).$
\item[(ii)] $I+ (J \cap L)= (I+J) \cap (I+L).$
\end{itemize}
\end{fact}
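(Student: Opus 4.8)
The plan is to reduce both identities to ordinary distributivity of union over intersection (and vice versa) among subsets of the monomial semigroup of $R$. For a monomial ideal $A$ write $\mathrm{Mon}(A)$ for the set of monomials of $R$ lying in $A$; since $A$ is spanned over $K$ by $\mathrm{Mon}(A)$, two monomial ideals coincide as soon as they contain the same monomials, and a monomial $w$ lies in $A$ if and only if $w$ is a multiple of some element of $\mathcal{G}(A)$. So it suffices to show that the two sides of (i) (resp. (ii)) contain the same monomials.

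First I would record two elementary observations valid for arbitrary monomial ideals $A,B$: that $\mathrm{Mon}(A\cap B)=\mathrm{Mon}(A)\cap\mathrm{Mon}(B)$, which is immediate from the definition of intersection; and that $\mathrm{Mon}(A+B)=\mathrm{Mon}(A)\cup\mathrm{Mon}(B)$. For the latter the inclusion $\supseteq$ is clear, while for $\subseteq$ one uses that $\mathcal{G}(A+B)\subseteq\mathcal{G}(A)\cup\mathcal{G}(B)$, so a monomial of $A+B$ is a multiple of a minimal generator of $A$ or of $B$ and hence lies in $A$ or in $B$.

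Granting these, (i) follows by computing, using set-theoretic distributivity in the middle step,
$$\mathrm{Mon}\big(I\cap(J+L)\big)=\mathrm{Mon}(I)\cap\big(\mathrm{Mon}(J)\cup\mathrm{Mon}(L)\big)=\big(\mathrm{Mon}(I)\cap\mathrm{Mon}(J)\big)\cup\big(\mathrm{Mon}(I)\cap\mathrm{Mon}(L)\big)=\mathrm{Mon}\big((I\cap J)+(I\cap L)\big),$$
and since the two monomial ideals have the same monomials they are equal. Identity (ii) is proved the same way, using instead $X\cup(Y\cap Z)=(X\cup Y)\cap(X\cup Z)$ for subsets. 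Alternatively one can argue directly with monomials: for $\subseteq$ in (ii) a monomial $w\in(I+J)\cap(I+L)$ either already lies in $I$, or, not being a multiple of any minimal generator of $I$, must be a multiple of a minimal generator of $J$ and of one of $L$, so $w\in J\cap L\subseteq I+(J\cap L)$; the reverse inclusion is trivial, and (i) is handled symmetrically. The only point needing any care — the ``hard part,'' such as it is — is the claim that a monomial in $A+B$ lies in $A$ or in $B$, i.e.\ the description of $\mathcal{G}(A+B)$ as contained in $\mathcal{G}(A)\cup\mathcal{G}(B)$; everything else is purely formal, which is why the statement is quoted as a routine exercise.
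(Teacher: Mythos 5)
Your proof is correct: the reduction of both identities to the equalities $\mathrm{Mon}(A\cap B)=\mathrm{Mon}(A)\cap\mathrm{Mon}(B)$ and $\mathrm{Mon}(A+B)=\mathrm{Mon}(A)\cup\mathrm{Mon}(B)$ (the latter resting on the standard fact that a monomial lies in a monomial ideal iff it is divisible by one of its monomial generators), followed by set-theoretic distributivity, is exactly the routine argument this exercise expects. The paper itself gives no proof, quoting the statement directly as \cite[Exercise 6.1.23]{V1}, so there is nothing further to compare; your write-up is a valid self-contained justification.
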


\begin{fact}\label{fact2} \cite[Exercise 2.1.62]{V1}
Let $R$  be a ring and $I$ an ideal. If $x \in R \setminus I$, 
 then there is an exact sequence of $R$-modules: 
 $$0\longrightarrow R/(I:x)\stackrel{\psi}{\longrightarrow} R/I \stackrel{\phi}{\longrightarrow} R/(I,x) \longrightarrow  0,$$
 where $\psi(\overline{r})= x\overline{r}$ is multiplication by $x$ and 
 $\phi(\overline{r}) =\overline{r}$.
\end{fact}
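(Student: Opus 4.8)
The plan is to verify directly that $\psi$ and $\phi$ are well-defined $R$-module homomorphisms and that the displayed sequence is exact at each of its three nonzero terms. First I would check well-definedness of $\psi$: if $\overline{r}=\overline{r'}$ in $R/(I:x)$, then $r-r'\in (I:x)$, so $x(r-r')\in I$, and hence $xr+I=xr'+I$; since multiplication by $x$ is $R$-linear, $\psi$ is an $R$-module homomorphism. The map $\phi$ is the canonical homomorphism induced by the identity on $R$, which is well defined because $I\subseteq (I,x)$ and is obviously surjective, so exactness at $R/(I,x)$ is immediate.

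Next I would prove exactness at $R/(I:x)$, i.e., injectivity of $\psi$: if $\psi(\overline{r})=\overline{0}$ in $R/I$, then $xr\in I$, which is precisely the statement that $r\in (I:x)$, so $\overline{r}=\overline{0}$ in $R/(I:x)$. For exactness at the middle term $R/I$ I would compute both submodules explicitly: $\mathrm{im}(\psi)=\{\,xr+I : r\in R\,\}=(xR+I)/I=(I,x)/I$, while $\ker(\phi)=\{\,r+I : r\in (I,x)\,\}=(I,x)/I$; hence $\mathrm{im}(\psi)=\ker(\phi)$, as required.

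I do not expect any real obstacle here: the statement is a routine diagram chase. The one point worth recording is that the hypothesis $x\in R\setminus I$ is not actually used in the argument; it only serves to guarantee $(I:x)\neq R$, so that the leftmost nonzero term $R/(I:x)$ does not collapse and the sequence carries genuine information --- which is exactly what makes it useful later on for comparing $R/I$ with the smaller modules $R/(I:x)$ and $R/(I,x)$.
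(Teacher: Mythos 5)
Your proof is correct and complete: the checks of well-definedness, the identification $\ker(\psi)=0$ via $xr\in I \iff r\in (I:x)$, the computation $\mathrm{im}(\psi)=(xR+I)/I=(I,x)/I=\ker(\phi)$, and the surjectivity of $\phi$ are exactly what is required. The paper states this fact without proof, citing it as an exercise in Villarreal's book, and your argument is the standard solution to that exercise; your remark that the hypothesis $x\in R\setminus I$ is only there to ensure $(I:x)\neq R$, so that $R/(I:x)$ is nonzero, is also accurate.
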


\begin{fact}\label{fact3} \cite[Exercise 9.42]{sharp}
Let 
$$0\longrightarrow L{\longrightarrow} M {\longrightarrow} N  \longrightarrow  0,$$

be a short exact sequence of modules and homomorphisms over the commutative Noetherian ring $R$. Then, 
$\mathrm{Ass}(L) \subseteq  \mathrm{Ass}(M) \subseteq 
 \mathrm{Ass}(L) \cup  \mathrm{Ass}(N).$ 
\end{fact}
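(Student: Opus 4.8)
The plan is to identify $L$ with its image under the injection $\iota\colon L\to M$, so that $L$ becomes a submodule of $M$ with $M/L\cong N$, and then to prove the two containments $\mathrm{Ass}(L)\subseteq\mathrm{Ass}(M)$ and $\mathrm{Ass}(M)\subseteq\mathrm{Ass}(L)\cup\mathrm{Ass}(N)$ separately, working directly from the definition of an associated prime as the annihilator of a single element.

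For the first containment I would argue as follows. Let $\mathfrak{p}\in\mathrm{Ass}(L)$ and choose a nonzero $x\in L$ with $\mathfrak{p}=(0:_R x)$. Since $\iota$ is injective, $\iota(x)$ is a nonzero element of $M$ and its annihilator in $R$ coincides with that of $x$, namely $\mathfrak{p}$; hence $\mathfrak{p}\in\mathrm{Ass}(M)$. This step is immediate once the identification is in place.

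For the second containment, take $\mathfrak{p}\in\mathrm{Ass}(M)$ and use the standard equivalent description: there is a submodule $M'\subseteq M$ with $M'\cong R/\mathfrak{p}$. Now I would split into two cases according to whether $M'\cap L$ is zero. If $M'\cap L=0$, then the composite $M'\hookrightarrow M\to M/L=N$ is injective, so $N$ contains a copy of $R/\mathfrak{p}$ and therefore $\mathfrak{p}\in\mathrm{Ass}(N)$. If $M'\cap L\neq 0$, then $M'\cap L$ is a nonzero submodule of $M'\cong R/\mathfrak{p}$; because $R/\mathfrak{p}$ is an integral domain, every nonzero submodule of it is isomorphic to a nonzero ideal of $R/\mathfrak{p}$ and hence has $\{\mathfrak{p}\}$ as its set of associated primes. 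Consequently $\mathfrak{p}\in\mathrm{Ass}(M'\cap L)\subseteq\mathrm{Ass}(L)$, and combining the two cases yields $\mathfrak{p}\in\mathrm{Ass}(L)\cup\mathrm{Ass}(N)$.

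I do not anticipate a genuine obstacle here: the statement is a purely formal consequence of the definition of associated primes together with the one auxiliary fact that a nonzero submodule of $R/\mathfrak{p}$ has associated prime set exactly $\{\mathfrak{p}\}$ (which follows since any nonzero element of a nonzero ideal of the domain $R/\mathfrak{p}$ has annihilator $\mathfrak{p}$). The only place that needs a little care is the case distinction on whether $M'\cap L$ vanishes in the second containment.
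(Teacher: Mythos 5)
Your argument is correct. The paper gives no proof of this statement---it is quoted as a Fact directly from Sharp's book (Exercise 9.42)---and your two-step argument (identifying $L$ with a submodule of $M$ so that $M/L\cong N$, noting annihilators are preserved under the injection for $\mathrm{Ass}(L)\subseteq\mathrm{Ass}(M)$, and then splitting on whether the copy $M'\cong R/\mathfrak{p}$ inside $M$ meets $L$, using that every nonzero element of a nonzero submodule of $R/\mathfrak{p}$ has annihilator exactly $\mathfrak{p}$) is precisely the standard textbook proof of that exercise, so there is nothing to correct or to compare beyond this.
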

\begin{fact}\label{fact4} \cite[Lemma  9.38]{sharp}
Let $M$  be a module over the commutative Noetherian ring
$R$, and let $S$ be a multiplicatively closed subset of $R$. Then
$$\mathrm{Ass}_{S^{-1}R}(S^{-1}M)=\{\mathfrak{p}S^{-1}R: \mathfrak{p}\in \mathrm{Ass}_R(M) ~~ \mathrm{and} ~~ \mathfrak{p} \cap S =\emptyset \}.$$
\end{fact}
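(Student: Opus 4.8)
The plan is to prove the two inclusions separately, leaning on the standard bijection (via extension and contraction) between the primes of $S^{-1}R$ and the primes $\mathfrak{p}$ of $R$ with $\mathfrak{p}\cap S=\emptyset$, under which $\mathfrak{p}S^{-1}R$ corresponds to $\mathfrak{p}$ and $(\mathfrak{p}S^{-1}R)\cap R=\mathfrak{p}$. For the inclusion $\supseteq$, I would take $\mathfrak{p}\in\mathrm{Ass}_R(M)$ with $\mathfrak{p}\cap S=\emptyset$, so that $\mathfrak{p}=(0:_R m)$ for some $m\in M$. First one checks that $m/1\neq 0$ in $S^{-1}M$, since $tm=0$ with $t\in S$ would force $t\in\mathfrak{p}\cap S$, a contradiction. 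Then I claim $(0:_{S^{-1}R} m/1)=\mathfrak{p}S^{-1}R$: the inclusion $\supseteq$ is immediate, and conversely if $(a/s)(m/1)=0$ then $uam=0$ for some $u\in S$, so $ua\in\mathfrak{p}$; since $u\notin\mathfrak{p}$ and $\mathfrak{p}$ is prime, $a\in\mathfrak{p}$, whence $a/s\in\mathfrak{p}S^{-1}R$. Thus $\mathfrak{p}S^{-1}R$ is the annihilator of the nonzero element $m/1$ of $S^{-1}M$, hence lies in $\mathrm{Ass}_{S^{-1}R}(S^{-1}M)$.

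For the inclusion $\subseteq$, I would start from $\mathfrak{P}\in\mathrm{Ass}_{S^{-1}R}(S^{-1}M)$; by the correspondence above, $\mathfrak{P}=\mathfrak{p}S^{-1}R$ where $\mathfrak{p}=\mathfrak{P}\cap R$ is prime and $\mathfrak{p}\cap S=\emptyset$. Writing $\mathfrak{P}=(0:_{S^{-1}R} m/s)$ and using that $s/1$ is a unit of $S^{-1}R$, we may assume $\mathfrak{P}=(0:_{S^{-1}R} m/1)$. Here is where the Noetherian hypothesis enters: write $\mathfrak{p}=(a_1,\dots,a_r)$; for each $i$, the relation $a_i(m/1)=0$ yields $t_ia_im=0$ for some $t_i\in S$, and setting $t:=t_1\cdots t_r\in S$ we get $a_i(tm)=0$ for all $i$, so $\mathfrak{p}\subseteq(0:_R tm)$. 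For the reverse inclusion, if $a(tm)=0$ then $a(m/1)=0$ in $S^{-1}M$ because $t/1$ is a unit, so $a/1\in\mathfrak{P}$ and therefore $a\in\mathfrak{P}\cap R=\mathfrak{p}$. Hence $(0:_R tm)=\mathfrak{p}$, so $\mathfrak{p}\in\mathrm{Ass}_R(M)$, and it meets $S$ trivially, which completes the proof.

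The only genuinely delicate point — and the sole place the Noetherian assumption is used — is the $\subseteq$ direction: a priori the contraction $\mathfrak{p}$ of an associated prime of $S^{-1}M$ merely \emph{contains} the annihilator of some element of $M$, and finite generation of $\mathfrak{p}$ is precisely what allows one to clear all the denominators simultaneously (via a single $t\in S$) and realize $\mathfrak{p}$ itself as an annihilator in $M$. Everything else is routine bookkeeping with fractions together with the standard extension–contraction bijection for primes under localization.
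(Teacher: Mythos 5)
Your argument is correct: the $\supseteq$ direction is the routine computation showing $\mathfrak{p}S^{-1}R=(0:_{S^{-1}R}m/1)$, and your use of finite generation of $\mathfrak{p}$ to find a single $t\in S$ clearing denominators is exactly the standard way the Noetherian hypothesis enters in the $\subseteq$ direction. The paper offers no proof of this statement—it is quoted verbatim as Lemma 9.38 of Sharp's \emph{Steps in Commutative Algebra}—and your argument is essentially the classical textbook proof of that lemma, so there is nothing to reconcile.
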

\begin{fact} \label{fact5}\cite[Lemma 3.12]{SN} 
 Let  $I$ be a monomial ideal in a polynomial ring $R=K[x_1, \ldots, x_n]$ with 
 $\mathcal{G}(I)=\{u_1, \ldots, u_m\}$, and $h=x_{j_1}^{b_1}\cdots x_{j_s}^{b_s}$ with $j_1, \ldots, j_s \in \{1, \ldots, n\}$ be a monomial in $R$. Then  $I$ is normally torsion-free  if and only if $hI$ is normally torsion-free.
\end{fact}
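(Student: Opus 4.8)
The plan is to prove both implications simultaneously by comparing $\mathrm{Ass}(R/(hI)^k)$ with $\mathrm{Ass}(R/I^k)$ for every $k\geq 1$. Since $(hI)^k=h^kI^k$ and $h^k$ has the same support as $h$, everything reduces to the following \emph{multiplication lemma}: if $J\subsetneq R$ is a proper monomial ideal and $m$ is a monomial with $\mathrm{supp}(m)=\{x_{i_1},\ldots,x_{i_r}\}$, then
\[
\mathrm{Ass}(R/mJ)=\mathrm{Ass}(R/J)\cup\{(x_{i_1}),\ldots,(x_{i_r})\}.
\]
I would prove this by induction on $\deg(m)$, peeling off one variable at a time. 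Writing $m=x_i m'$ (so $m'J$ is again proper), apply Fact \ref{fact2} with the element $x_i$ to the ideal $x_i(m'J)$; note $x_i\notin x_i(m'J)$, since otherwise $1\in m'J$. Because $(x_i(m'J):x_i)=m'J$ and $(x_i(m'J),x_i)=(x_i)$, this produces the short exact sequence $0\to R/(m'J)\to R/(mJ)\to R/(x_i)\to 0$.

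By Fact \ref{fact3} this sequence gives $\mathrm{Ass}(R/m'J)\subseteq\mathrm{Ass}(R/mJ)\subseteq\mathrm{Ass}(R/m'J)\cup\{(x_i)\}$, so only $(x_i)\in\mathrm{Ass}(R/mJ)$ remains to be checked. For that, localize at the height-one prime $(x_i)$: the ring $R_{(x_i)}$ is a one-dimensional regular local ring, hence every nonzero proper ideal is primary to its maximal ideal, and $mJR_{(x_i)}$ is nonzero (as $J\neq 0$) and proper (as $x_i\mid m$ forces $mJR_{(x_i)}\subseteq(x_i)R_{(x_i)}$). Therefore $(x_i)R_{(x_i)}\in\mathrm{Ass}(R_{(x_i)}/mJR_{(x_i)})$, and Fact \ref{fact4} lifts this to $(x_i)\in\mathrm{Ass}(R/mJ)$. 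Induction then yields the lemma. I would also record the companion observation, proved by the same localization-at-$(x_i)$ argument, that for any variable $x_j$ one has $(x_j)\in\mathrm{Ass}(R/I^k)$ if and only if $I\subseteq(x_j)$; in particular, membership of a \emph{height-one} monomial prime in $\mathrm{Ass}(R/I^k)$ is independent of $k$.

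To finish, set $S=\{(x_j):x_j\in\mathrm{supp}(h)\}$. Applying the multiplication lemma to $(m,J)=(h^k,I^k)$ and to $(m,J)=(h,I)$ gives $\mathrm{Ass}(R/(hI)^k)=\mathrm{Ass}(R/I^k)\cup S$ and $\mathrm{Ass}(R/hI)=\mathrm{Ass}(R/I)\cup S$, so $hI$ is normally torsion-free precisely when $\mathrm{Ass}(R/I^k)\subseteq\mathrm{Ass}(R/I)\cup S$ for all $k$. This condition is in turn equivalent to $\mathrm{Ass}(R/I^k)\subseteq\mathrm{Ass}(R/I)$ for all $k$, i.e. to $I$ being normally torsion-free: one inclusion is immediate, and conversely, if $P\in\mathrm{Ass}(R/I^k)$ with $P\in S$, then $P=(x_j)$ for some $x_j\mid h$ and, by the companion observation, $(x_j)\in\mathrm{Ass}(R/I)$ already. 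The trivial cases $h=1$ (where $hI=I$) and $I=R$ (where both ideals are vacuously normally torsion-free) are dealt with separately. The main obstacle is the multiplication lemma — specifically showing that the newly appearing height-one prime $(x_i)$ is genuinely associated to $R/mJ$, rather than merely being allowed by the inclusion from Fact \ref{fact3} — together with bookkeeping the extra primes in $S$ so that they cancel cleanly in the final equivalence.
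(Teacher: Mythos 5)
The paper does not prove this statement at all: it is quoted as a Fact with a citation to \cite[Lemma 3.12]{SN}, so there is no in-paper proof to compare against. Your argument, however, is a correct and self-contained proof. The multiplication lemma $\mathrm{Ass}(R/mJ)=\mathrm{Ass}(R/J)\cup\{(x_i):x_i\in\mathrm{supp}(m)\}$ is established cleanly: the colon and sum computations $(x_im'J:x_i)=m'J$ and $(x_im'J,x_i)=(x_i)$ are valid, Fact \ref{fact2}/Fact \ref{fact3} give the two inclusions, and the localization at the height-one prime $(x_i)$ (a discrete valuation ring) together with Fact \ref{fact4} correctly forces $(x_i)\in\mathrm{Ass}(R/mJ)$, which is exactly the point the exact-sequence bound alone would not give. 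The companion observation that $(x_j)\in\mathrm{Ass}(R/I^k)$ iff $I\subseteq(x_j)$, independently of $k$, is what makes the extra primes in $S$ harmless in the final equivalence; note you could also dispense with it by remarking that a height-one monomial prime containing $I$ is automatically a minimal prime of $I$, hence already lies in $\mathrm{Ass}(R/I)$. Two small housekeeping points: your lemma needs $J\neq 0$ (guaranteed here since $\mathcal{G}(I)=\{u_1,\ldots,u_m\}$ is nonempty), and in the excluded case $I=R$ the ideal $hI=(h)$ is normally torsion-free not vacuously but because $\mathrm{Ass}(R/(h^k))=\{(x_j):x_j\in\mathrm{supp}(h)\}$ for every $k$; with those remarks in place the proof is complete.
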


Throughout this paper,  we denote the unique minimal set of monomial generators of a  monomial ideal $I$  by $\mathcal{G}(I)$. Also, $R=K[x_1,\ldots, x_n]$ is a polynomial ring over a field $K$, $\mathfrak{m}=(x_1, \ldots, x_n)$ is  the  graded maximal ideal of $R$, and $x_1, \ldots, x_n$ are indeterminates. The symbol $\mathbb{N}$  will always denote the set of positive integers.

%%%%%%%%%%%%%%%%%%%%%%%%%%% 
%%%%%%%%%%%%%%%%%%%%%%%%%%%
\section{Some results on the embedded associated primes of monomial ideals}
In this section, we focus on the embedded associated primes of monomial ideals. The main results of this section are Corollary  \ref{Main.Corollary} and Theorem \ref{Th.Main2}. To achieve this goal,  we begin  with the following lemma which is essential  for the proofs of  Proposition  
\ref{reduction} and Theorem \ref{disjoint}.

\begin{lemma} \cite[Lemma 2.1]{KHN2} \label{Kaplansky}
Let $S$ be a commutative ring and let $a_1,\ldots,a_m$ be elements constituting a permutable  $S$-sequence. Let $J$
 be an ideal generated by monomials in $a_{t+1},\ldots,a_m$  for some $t\in\mathbb{N}$ with $1\leq t \leq m-1$. Then $(J:_S a_1^{n_1} a_2^{n_2}\ldots a_t^{n_t})=J$ for all $n_1,n_2,\ldots,n_t\in\mathbb{N}$.
\end{lemma}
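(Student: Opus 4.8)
The plan is to reduce the statement to the following \emph{key claim}: if $a_1,\dots,a_m$ is a permutable regular sequence in a commutative ring $S$ and $L$ is an ideal generated by monomials in the subfamily $\{a_j:j\in B\}$ for some $B\subseteq\{1,\dots,m\}$, then every $a_i$ with $i\notin B$ is a nonzerodivisor on $S/L$. Granting this (applied with $L=J$ and $B=\{t+1,\dots,m\}$, so that each $a_i$ with $1\le i\le t$ is a nonzerodivisor modulo $J$), one gets $(J:_S a_i^n)=J$ for every $n\in\mathbb{N}$ by peeling off one factor of $a_i$ at a time, and hence, since $(J:_S a_1^{n_1}\cdots a_t^{n_t})$ is obtained by colon-ing $J$ successively by $a_t^{n_t},a_{t-1}^{n_{t-1}},\dots,a_1^{n_1}$, the whole colon ideal collapses to $J$. (This is just the assertion that $a_1,\dots,a_t$ is a regular sequence on $S/J$, but only the pointwise nonzerodivisor statement is needed.)

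For the key claim I would start with the principal case $L=(v)$, where $v=a_{j_1}a_{j_2}\cdots a_{j_d}$ is a monomial in the $a_j$'s with $j\in B$. The descending chain $S\supseteq(a_{j_1})\supseteq(a_{j_1}a_{j_2})\supseteq\cdots\supseteq(v)$ equips $S/(v)$ with a finite filtration whose $k$-th successive quotient is $(a_{j_1}\cdots a_{j_{k-1}})/(a_{j_1}\cdots a_{j_k})$; because a product of terms of a permutable regular sequence is a nonzerodivisor, multiplication by $a_{j_1}\cdots a_{j_{k-1}}$ is injective and identifies that quotient with $S/(a_{j_k})$. On each $S/(a_{j_k})$ the element $a_i$ is a nonzerodivisor, since $a_{j_k},a_i$ is a regular sequence — a two-term subsequence of the permutable sequence taken in that order, with $j_k\ne i$ as $j_k\in B\not\ni i$. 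As a nonzerodivisor on every quotient of a finite filtration is a nonzerodivisor on the whole module (the short exact sequence $0\to M'\to M\to M''\to 0$, then induction on length), $a_i$ is a nonzerodivisor on $S/(v)$.

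For a general $L=(w_1,\dots,w_r)$ I would induct on the total degree $D=\sum_\ell\deg w_\ell$ of a generating set, stating the key claim uniformly over all commutative rings and permutable regular sequences so that it may be re-invoked after base change. Pick a variable $a_k$ with $k\in B$ dividing some $w_\ell$, and pass to $S/a_kS$, where the images of the remaining $a_l$'s again form a permutable regular sequence and the generators surviving reduction (those with $a_k\nmid w_\ell$) have total degree $<D$; the inductive hypothesis then makes $\overline{a_i}$ a nonzerodivisor on $S/(L+a_kS)$, so $a_ix\in L$ forces $x=y+a_kz$ with $y\in L$ and $a_iz\in(L:_S a_k)$. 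One finishes with the colon identity $(L:_S a_k)=\big(\{w_\ell/a_k:a_k\mid w_\ell\}\cup\{w_\ell:a_k\nmid w_\ell\}\big)$: its right-hand side has total degree $<D$, so the inductive hypothesis applies to it, forcing $z\in(L:_S a_k)$, i.e. $a_kz\in L$ and therefore $x\in L$. The colon identity itself follows, by the same inductive mechanism, from the key claim in strictly smaller degree. (Alternatively, one may try to show directly that monomial ideals in a permutable regular sequence admit irreducible-monomial decompositions $L=\bigcap_l\mathfrak q_l$ with each $\mathfrak q_l$ of the form $(a_{j_1}^{e_1},\dots,a_{j_s}^{e_s})$, exactly as in a polynomial ring, so that $S/L$ embeds into $\bigoplus_l S/\mathfrak q_l$ and $a_i$ is a nonzerodivisor on each summand because $a_{j_1}^{e_1},\dots,a_{j_s}^{e_s},a_i$ is a regular sequence — which in turn reduces to the principal case plus the fact that raising terms of a regular sequence to powers preserves regularity.)

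The step I expect to be the main obstacle is exactly this non-principal case of the key claim: one must either verify the colon-by-a-variable identity over an arbitrary permutable regular sequence and track carefully that the induction parameter strictly drops at the reduction modulo $a_k$, or else prove that a permutable regular sequence behaves ``distributively'' for intersections of monomial ideals, so that the combinatorial irreducible decomposition of a monomial ideal in a polynomial ring transfers verbatim. The remaining ingredients — that products of terms of a permutable regular sequence, and powers of a nonzerodivisor, are nonzerodivisors, and that nonzerodivisors propagate along finite filtrations — are routine.
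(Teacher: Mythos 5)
Your proposal is correct, but it takes a genuinely different route from the paper's. The paper disposes of the lemma in a few lines: it inducts on $t$, the number of elements appearing in the colon, and outsources the base case $(J:_S a_1^{n_1})=J$ to Kaplansky's Lemma~3 in \cite{KA}; the inductive step is simply $(J:_S a_1^{n_1}\cdots a_{t+1}^{n_{t+1}})=((J:_S a_1^{n_1}\cdots a_t^{n_t}):_S a_{t+1}^{n_{t+1}})=(J:_S a_{t+1}^{n_{t+1}})=J$. You instead reprove the whole content from first principles: your ``key claim'' (each $a_i$ with $i\notin B$ is a nonzerodivisor on $S/L$ when $L$ is generated by monomials in $\{a_j:j\in B\}$) is essentially Kaplansky's lemma itself, and your filtration argument in the principal case together with the induction on total degree in the general case is a legitimate self-contained proof of it. The step you left compressed, the colon-by-a-variable identity, does go through by the mechanism you indicate, and the induction is not circular: writing $L=a_kM+N$, where $M$ is generated by the $w_\ell/a_k$ with $a_k\mid w_\ell$ and $N$ by the generators not divisible by $a_k$, an equation $a_kx=a_k\mu+\nu$ with $\mu\in M$, $\nu\in N$ gives $a_k(x-\mu)\in N$; since $N$ is generated by monomials avoiding $a_k$ and has strictly smaller total degree, the key claim at lower degree yields $x-\mu\in N$, hence $x\in M+N$, and this is precisely the case (some generator divisible by $a_k$) that your proof of the key claim needs. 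In short, the paper's route buys brevity at the price of a citation, while yours buys self-containedness and the stronger structural facts (the nonzerodivisor property and the monomial colon formula over a permutable regular sequence) at the price of length and of having to track the degree induction carefully.
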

\begin{proof}
Proceed by induction on $t$. In the case in which $t=1$ follows from \cite[Lemma 3]{KA}. Now, suppose, inductively, that $t>1$ and that the result has been proved for $t$. Then, by using Lemma 3 in \cite{KA} in conjunction with induction hypothesis, one has  the following equalities 
$$(J:_S a_1^{n_1} a_2^{n_2}\ldots a_{t+1}^{n_{t+1}})=((J:_S a_1^{n_1} a_2^{n_2}\ldots a_t^{n_t}):_S a_{t+1}^{n_{t+1}})=(J:_S a_{t+1}^{n_{t+1}})=J.$$
\end{proof}

%%%%%%%%%%%%%%%%%%%%%%%%%%% 
%%%%%%%%%%%%%%%%%%%%%%%%%%%
 
%%%%%%%%%%%%%%%%%%%%%%%%%%% 

To prove   Theorem \ref{disjoint}, one must  apply the subsequent proposition. 

\begin{proposition}\label{reduction}
Let  $I$ be   a monomial ideal in a  polynomial ring $R=K[x_1, \ldots, x_n]$ over a field $K$ and $\mathfrak{p}\in \mathrm{Ass}(R/I)$. Then there exists a  monomial $v$ in  $R$ with $\mathrm{supp}(v)\subseteq \cup_{u\in \mathcal{G}(I)} \mathrm{supp}(u)$ such that $\mathfrak{p}=(I:_R v)$.
\end{proposition}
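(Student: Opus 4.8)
The plan is to reduce an arbitrary associated prime to one coming from a primary decomposition, whose generators already have support inside the variable set of the generators of $I$. First I would recall that since $R$ is Noetherian and $I$ is a monomial ideal, $I$ admits an irredundant primary decomposition $I=\bigcap_{i=1}^k \mathfrak{q}_i$ in which each $\mathfrak{q}_i$ is a monomial primary ideal; moreover each associated prime $\mathfrak{p}_i=\sqrt{\mathfrak{q}_i}$ is a \emph{monomial prime}, i.e. generated by a subset of the variables, and in fact by the standard theory of monomial ideals every variable appearing in $\mathfrak{p}_i$ lies in $\bigcup_{u\in\mathcal{G}(I)}\mathrm{supp}(u)$ (the variables not dividing any generator of $I$ play no role in any monomial primary component). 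So $\mathrm{Ass}(R/I)=\{\mathfrak{p}_1,\dots,\mathfrak{p}_k\}$ and each $\mathfrak{p}_i$ is of the desired form as far as \emph{which} prime it is; the content of the proposition is to exhibit a \emph{single monomial} $v$, with support in that variable set, with $\mathfrak{p}=(I:_R v)$.

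The key step: given $\mathfrak{p}=\mathfrak{p}_j$, I would produce $v$ explicitly. Set $J=\bigcap_{i\neq j}\mathfrak{q}_i$, so $I = \mathfrak{q}_j\cap J$ and $J\not\subseteq \mathfrak{q}_j$ by irredundancy; pick a monomial $w\in J\setminus \mathfrak{q}_j$ (possible since all ideals involved are monomial). Then $(I:_R w)=(\mathfrak{q}_j:_R w)\cap(J:_R w)=(\mathfrak{q}_j:_R w)$, and since $\mathfrak{q}_j$ is $\mathfrak{p}_j$-primary with $w\notin\mathfrak{q}_j$, the colon $(\mathfrak{q}_j:_R w)$ is again $\mathfrak{p}_j$-primary, in particular $\sqrt{(I:_R w)}=\mathfrak{p}_j$. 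Now I claim we can adjust $w$ so that the colon is exactly $\mathfrak{p}_j$: the ideal $(I:_R w)$ is a monomial $\mathfrak{p}_j$-primary ideal, so for each variable $x_\ell\in\mathfrak{p}_j$ there is an exponent $c_\ell\ge 1$ with $x_\ell^{c_\ell}\in (I:_R w)$; replacing $w$ by $w'=w\cdot\prod_{\ell}x_\ell^{c_\ell-1}$ (product over $x_\ell\in\mathfrak{p}_j$) still gives $w'\notin\mathfrak{q}_j$ — here one checks $w'\in J$ is automatic only if those variables lie in the right place, so more carefully I would instead argue as follows: among all monomials $w$ with $(I:_R w)$ being $\mathfrak{p}_j$-primary, the colons $(I:_R w)$ form a family of monomial ideals, and one shows $\mathfrak{p}_j$ itself is achieved by taking $w$ to be a suitable monomial multiple. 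Concretely, since $\mathfrak{p}_j\in\mathrm{Ass}(R/I)$ there is \emph{some} $h\in R$ with $\mathfrak{p}_j=(I:_R h)$; writing $h=\sum c_\alpha x^\alpha$ and using that $I$ is monomial, $(I:_R h)=\bigcap_\alpha (I:_R x^\alpha)$, so $\mathfrak{p}_j=\bigcap_\alpha(I:_R x^\alpha)$; since $\mathfrak{p}_j$ is prime and each $(I:_R x^\alpha)\supseteq$ nothing forces it, but $\mathfrak{p}_j=\bigcap_\alpha(I:_R x^\alpha)$ with $\mathfrak{p}_j$ prime forces $\mathfrak{p}_j=(I:_R x^\alpha)$ for at least one $\alpha$ (a prime that is an intersection of ideals equals one of them when it contains their product; here use $\prod_\alpha(I:_R x^\alpha)\subseteq\mathfrak{p}_j$ and primeness to get some $(I:_R x^\alpha)\subseteq\mathfrak{p}_j$, combined with the reverse inclusion). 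Thus $v:=x^\alpha$ is a monomial with $\mathfrak{p}_j=(I:_R v)$.

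It remains to arrange $\mathrm{supp}(v)\subseteq\bigcup_{u\in\mathcal{G}(I)}\mathrm{supp}(u)$. If $x_s\in\mathrm{supp}(v)$ but $x_s$ divides no generator of $I$, then no minimal generator of $I$ is divisible by $x_s$, hence $(I:_R v)=(I:_R v/x_s^{a})$ where $x_s^a\|v$: indeed a monomial $m$ satisfies $mv\in I$ iff $mv$ is divisible by some $u\in\mathcal{G}(I)$, and since $x_s\nmid u$ this is equivalent to $(mv/x_s^a)$ being divisible by $u$, i.e. $m\cdot(v/x_s^a)\in I$. So deleting all such variables from $v$ does not change the colon ideal, yielding the required monomial. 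The main obstacle is the middle step — pinning down that the prime $\mathfrak{p}$ is literally realized as $(I:_R v)$ for a \emph{monomial} $v$, not merely that some monomial colon has radical $\mathfrak{p}$; the clean way around it is the observation that for monomial $I$ and arbitrary $h=\sum c_\alpha x^\alpha$ one has $(I:_R h)=\bigcap_\alpha (I:_R x^\alpha)$, after which primeness of $\mathfrak{p}$ does the rest. Everything else is routine manipulation of monomial ideals, using Fact \ref{fact1} for the colon-distributes-over-intersection identities.
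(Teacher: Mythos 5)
The one step that does not stand as written is the pivotal identity $(I:_R h)=\bigcap_\alpha (I:_R x^\alpha)$ for $h=\sum_\alpha c_\alpha x^\alpha$, which you attribute to $I$ being monomial: only the inclusion $\bigcap_\alpha (I:_R x^\alpha)\subseteq (I:_R h)$ holds in general. For instance, with $I=(x^2,y^2)$ and $h=x+y$ one has $x-y\in (I:_R h)$ because $(x-y)(x+y)=x^2-y^2\in I$, while $x-y\notin (I:_R x)\cap(I:_R y)=(x,y^2)\cap(x^2,y)$. The equality you need is nevertheless true in the situation where you apply it, but its proof requires the extra ingredient you mention earlier and then do not use: $\mathfrak{p}=(I:_R h)$ is a \emph{monomial} prime, generated by variables. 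Given that, argue on monomials: for a variable $x_t\in\mathfrak{p}$ one has $x_th=\sum_\alpha c_\alpha x_tx^\alpha\in I$, and since the monomials $x_tx^\alpha$ are pairwise distinct there is no cancellation, so each $x_tx^\alpha\in I$, i.e.\ $x_t\in\bigcap_\alpha(I:_R x^\alpha)$; this gives $\mathfrak{p}\subseteq\bigcap_\alpha(I:_R x^\alpha)$, the reverse inclusion is trivial, and then your primeness argument (prime containing the product of the $(I:_R x^\alpha)$) correctly selects one $\alpha$ with $\mathfrak{p}=(I:_R x^\alpha)$. With that repair your argument is complete; note also that your first attempt via $J=\bigcap_{i\neq j}\mathfrak{q}_i$ is superseded and can be dropped.

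It is worth comparing routes: what you are re-proving in the repaired middle step is exactly \cite[Corollary 1.3.10]{HH1}, which the paper simply cites to obtain a monomial witness $w$ with $\mathfrak{p}=(I:_R w)$. The paper then finishes in one line by writing $w=w_1w_2$ with $\mathrm{supp}(w_1)\subseteq\bigcup_{u\in\mathcal{G}(I)}\mathrm{supp}(u)$ and $\mathrm{supp}(w_2)$ in the remaining variables, and invoking Lemma \ref{Kaplansky} to get $(I:_Rw_2)=I$, hence $\mathfrak{p}=(I:_Rw_1)$. Your final cleanup — deleting from $v$ the variables dividing no generator by a direct divisibility argument — is an elementary and correct substitute for Lemma \ref{Kaplansky}, so your proof is self-contained where the paper's is citation-based; the price is the extra care needed at the colon identity discussed above.
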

\begin{proof}
By virtue of \cite[Corollary 1.3.10]{HH1}, there exists a monomial $w$ in  $R$ such that $\mathfrak{p}=(I:_R w)$. 
Without loss of generality, one may assume that 
$\cup_{u\in \mathcal{G}(I)} \mathrm{supp}(u)=\{x_1, \ldots, x_m\}$. Certainly, one can write $w=w_1w_2$, where  
$\mathrm{supp}(w_1)\subseteq \{x_1, \ldots, x_m\}$ and 
$\mathrm{supp}(w_2)\subseteq \{x_{m+1}, \ldots, x_n\}$. 
It follows from Lemma \ref{Kaplansky} that $(I:_Rw_2)=I$, and so we get the following equalities 
$$\mathfrak{p}=(I:_Rw)=((I:_Rw_2):_Rw_1)=(I:_Rw_1).$$ 
This completes our argument.
\end{proof}

%%%%%%%%%%%%%%%%%%%%%%%%%%% 
%%%%%%%%%%%%%%%%%%%%%%%%%%%

The next theorem enables us to argue on the associated prime ideals of a monomial ideal which can be divided into two monomial ideals generated in disjoint sets of variables. 

\begin{theorem}\label{disjoint}
 Let $I_1\subset R_1=K[x_1, \ldots, x_n]$ and  
$I_2\subset R_2=K[y_1, \ldots, y_m]$ be two monomial ideals in disjoint sets of variables. Let 
$$I=I_1R+I_2R\subset R=K[x_1, \ldots, x_n, y_1, \ldots, y_m].$$ 
Then $\mathfrak{p}\in \mathrm{Ass}(R/I)$ if and only if 
$\mathfrak{p}=\mathfrak{p}_1R + \mathfrak{p}_2R$, where 
$ \mathfrak{p}_1\in \mathrm{Ass}(R_1/I_1)$ 
and $ \mathfrak{p}_2\in \mathrm{Ass}(R_2/I_2)$.
\end{theorem}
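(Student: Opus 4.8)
The plan is to prove both implications by exploiting the tensor product structure $R = R_1 \otimes_K R_2$ together with the monomial characterization of associated primes: every associated prime of a monomial ideal has the form $(I :_R w)$ for a suitable monomial $w$, and by Proposition \ref{reduction} we may take $\operatorname{supp}(w) \subseteq \bigcup_{u \in \mathcal{G}(I)} \operatorname{supp}(u)$. Since $\mathcal{G}(I) = \mathcal{G}(I_1) \cup \mathcal{G}(I_2)$ (the generators of $I_1$ and $I_2$ live in disjoint variable sets, so no cancellation or redundancy occurs), the support of such a $w$ splits as a product $w = w^{(1)} w^{(2)}$ with $\operatorname{supp}(w^{(1)}) \subseteq \{x_1,\dots,x_n\}$ and $\operatorname{supp}(w^{(2)}) \subseteq \{y_1,\dots,y_m\}$.

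For the forward direction, suppose $\mathfrak{p} = (I :_R w) \in \operatorname{Ass}(R/I)$ with $w = w^{(1)}w^{(2)}$ as above. The key computation is that $I :_R (w^{(1)}w^{(2)}) = (I_1 :_{R_1} w^{(1)})R + (I_2 :_{R_2} w^{(2)})R$; this follows because a monomial $x^a y^b$ lies in $I = I_1 R + I_2 R$ iff $x^a \in I_1$ or $y^b \in I_2$ (a monomial lies in a sum of monomial ideals iff it lies in one of them), and one checks the colon ideal degree by degree on monomials. Since $\mathfrak{p}$ is prime and equals a sum $\mathfrak{q}_1 R + \mathfrak{q}_2 R$ with $\mathfrak{q}_i$ monomial ideals in disjoint variables, each $\mathfrak{q}_i$ must itself be prime (a sum of two proper monomial ideals in disjoint variables is prime only if each summand is a monomial prime, i.e., generated by a subset of its own variables — otherwise one produces a product of two nonunits outside the ideal). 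Writing $\mathfrak{p}_i = \mathfrak{q}_i$, we then have $\mathfrak{p}_i = (I_i :_{R_i} w^{(i)})$, so $\mathfrak{p}_i \in \operatorname{Ass}(R_i/I_i)$ (here one must rule out the degenerate case $w^{(i)} \in I_i$, which would force $\mathfrak{p}$ to omit that block entirely; this case is handled by noting the remaining block still gives the stated form with the missing $\mathfrak{p}_i$ interpreted as the zero ideal, or by a direct argument). For the reverse direction, given $\mathfrak{p}_1 = (I_1 :_{R_1} v_1)$ and $\mathfrak{p}_2 = (I_2 :_{R_2} v_2)$ with $v_i$ monomials, set $v = v_1 v_2$; the same colon formula gives $(I :_R v) = \mathfrak{p}_1 R + \mathfrak{p}_2 R$, and since each $\mathfrak{p}_i R$ is a monomial prime and they are supported in disjoint variables, their sum $\mathfrak{p}_1 R + \mathfrak{p}_2 R$ is prime, hence lies in $\operatorname{Ass}(R/I)$.

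The main obstacle I anticipate is making the colon formula $I :_R (w^{(1)}w^{(2)}) = (I_1 :_{R_1} w^{(1)})R + (I_2 :_{R_2} w^{(2)})R$ fully rigorous, including the boundary cases where $w^{(i)}$ already lies in $I_i$ (so that $(I_i :_{R_i} w^{(i)}) = R_i$ and the colon collapses to the whole ring) or where one of the $w^{(i)}$ is $1$. One clean way around this is to use Fact \ref{fact4} on localization: localize $R$ at the multiplicative set generated by the variables not dividing $w$, reducing to the case where the relevant colon is computed more transparently, or alternatively to invoke the tensor decomposition $R/I \cong (R_1/I_1) \otimes_K (R_2/I_2)$ and the known behaviour of associated primes under tensor products over a field. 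A second, more bookkeeping-style obstacle is the claim that a monomial prime of $R$ decomposes as $\mathfrak{p}_1 R + \mathfrak{p}_2 R$ with $\mathfrak{p}_i$ a monomial prime of $R_i$: this is elementary but should be stated carefully, since it is exactly where the disjointness of the variable sets is used in an essential way.
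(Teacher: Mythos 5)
Your proposal is correct, but its forward direction takes a genuinely different route from the paper. The paper proves the inclusion $\mathrm{Ass}(R/I)\subseteq\{\mathfrak{p}_1R+\mathfrak{p}_2R\}$ by taking minimal primary decompositions $I_1=\bigcap_i Q_i$ and $I_2=\bigcap_j Q'_j$, using Fact \ref{fact1} to write $I=\bigcap_{i,j}(Q_i+Q'_j)$, embedding $R/I$ (and then each $R/(Q_i+I_2)$) into a direct sum of the corresponding quotients, and observing that $Q_i+Q'_j$ is $(\mathfrak{p}_i+\mathfrak{p}'_j)$-primary; you instead stay entirely with monomial witnesses, splitting $w=w^{(1)}w^{(2)}$ and proving the colon identity $(I:_Rw)=(I_1:_{R_1}w^{(1)})R+(I_2:_{R_2}w^{(2)})R$, then using that a prime monomial ideal is generated by variables to conclude that each block $(I_i:_{R_i}w^{(i)})$ is itself prime, hence lies in $\mathrm{Ass}(R_i/I_i)$. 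Your route is more elementary (no primary decomposition is needed) and yields both implications from the single colon identity, whose justification you sketch correctly; the paper's route avoids verifying that identity but needs the decomposition and direct-sum machinery. For the reverse implication the two arguments are essentially the same: the paper computes $(I:_Rvv')=\mathfrak{p}+\mathfrak{p}'$ via Proposition \ref{reduction} and Lemma \ref{Kaplansky}, which is exactly your colon identity in the case at hand. Three small remarks. First, in your forward direction Proposition \ref{reduction} is not actually needed: every monomial of $R$ already factors as $w^{(1)}w^{(2)}$ because the variables of $R$ are exhausted by those of $R_1$ and $R_2$. Second, the degenerate case $w^{(i)}\in I_i$ cannot occur: it would give $(I_i:_{R_i}w^{(i)})=R_i$ and hence $(I:_Rw)=R$, contradicting that $(I:_Rw)$ is a (proper) prime; your suggested reading of ``interpreting the missing $\mathfrak{p}_i$ as the zero ideal'' is not the right fix, since $0\in\mathrm{Ass}(R_i/I_i)$ only when $I_i=0$, but because the case is vacuous the proof is unaffected. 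Third, the alternative you float of invoking the known behaviour of associated primes under tensor products over a field would be circular here, since that behaviour is precisely the statement being proved; the direct colon computation is the right way to go.
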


\begin{proof}
Let $I_1=Q_1\cap \cdots \cap Q_s$ be a minimal primary decomposition of $I_1$ such that $\sqrt{Q_i}=\mathfrak{p}_i$ for all $i=1,\ldots,s$. In view of  Fact~\ref{fact1}, we  have 
 $$I=\bigcap_{i=1}^s Q_i+I_2=\bigcap_{i=1}^s (Q_i+I_2).$$ 
 In addition, it is routine to check that  there exists the following  $R$-monomorphism
 $$\theta : R/I \rightarrow \bigoplus_{i=1}^s R/(Q_i+I_2),$$ given by $\theta(r+I)=(r+Q_1+I_2,\ldots, r+Q_s+I_2)$ for all $r\in R$.
This implies that    $$\mathrm{Ass}(R/I) \subseteq \bigcup_{i=1}^s \mathrm{Ass}(R/(Q_i+I_2)).$$ 
Assume that  $I_2=Q'_1\cap \cdots \cap Q'_t$ is  a minimal primary decomposition of $I_2$ such that $\sqrt{Q'_j}=\mathfrak{p}'_j$ for all $j=1,\ldots,t$.
Now,  fix $1\leq i \leq s$. Applying  Fact~\ref{fact1} yields 
  $$Q_i+I_2= Q_i+ \bigcap_{j=1}^t Q'_j=\bigcap_{j=1}^t
(Q_i+Q'_j).$$ 
Consequently,  there exists the 
 following $R$-monomorphism 
 $$\phi : R/(Q_i+I_2) \rightarrow \bigoplus_{j=1}^t R/(Q_i+Q'_j),$$
given by $\phi(r+Q_i+I_2)=
(r+ Q_i +Q'_1, \ldots, r+Q_i+Q'_t)$ for  all  $r\in R$. This implies   $$\mathrm{Ass}(R/(Q_i+ I_2)) \subseteq \bigcup_{j=1}^t \mathrm{Ass}(R/(Q_i + Q'_j)).$$
  Since $Q_i+Q'_j$ is a $(\mathfrak{p}_i+\mathfrak{p}'_j)$-primary monomial ideal, for all  $j=1,\ldots, t$, one has 
$\mathrm{Ass}(R/(Q_i+ Q'_j))= \mathfrak{p}_i+\mathfrak{p}'_j $  for all  $i=1,\ldots,s$ and $j=1,\ldots, t$.  Therefore,  
  $$\mathrm{Ass}(R/I)\subseteq\{\mathfrak{p} \ |\ \mathfrak{p} = \mathfrak{p}_i +  \mathfrak{p}'_j \ \mathrm{for} \ \mathrm{all} \ i=1,\ldots, s \ \mathrm{and} \ j=1,\ldots, t\}.$$
 To establish the  reverse inclusion, pick arbitrary  elements  $\mathfrak{p} \in \mathrm{Ass}(R/I_1)$ and $\mathfrak{p}' \in \mathrm{Ass}(R/I_2)$. Then there exist  monomials
 $v$ and $v'$ in  $R$ such that $\mathfrak{p}=(I_1:_R v)$ and   $\mathfrak{p}'=(I_2:_R v')$. Based on  Proposition \ref{reduction},  we may assume that 
 \begin{equation}
 \displaystyle\mathrm{supp}(v)\subseteq \bigcup_{f\in \mathcal{G}(I_1)}\mathrm{supp}(f) \text{~~and~~} \mathrm{supp}(v')\subseteq \bigcup_{g\in \mathcal{G}(I_2)}\mathrm{supp}(g).\label{11}
 \end{equation}
 In  light of (\ref{11}) and Lemma \ref{Kaplansky}, one can deduce that $(I_1:_Rvv')=(I_1:_Rv)$ and  $(I_2:_Rvv')=(I_2:_Rv')$. Accordingly, we obtain the following equalities
 
   \begin{align*}
  \mathfrak{p} + \mathfrak{p}' & = (I_1:_R v) + (I_2:_R v')\\
   &= (I_1:_R vv') + (I_2:_R vv')   \\
 &=(I_1+I_2:_Rvv') \\
  &= (I:_Rvv').
 \end{align*} 
 This implies that   $\mathfrak{p} + \mathfrak{p}' \in \mathrm{Ass}_R(R/I)$. Thus, the reverse inclusion holds. 
\end{proof}

%%%%%%%%%%%%%%%%%%%%%%%%%%% 
%%%%%%%%%%%%%%%%%%%%%%%%%%%

To prove Corollary \ref{Main.Corollary}, we need to use the following  crucial theorem. Moreover, it should be noted that  although  checking the hypotheses of the following theorem is not easy practically, but  using Macaulay 2 software \cite{GS} may be helpful. 

\begin{theorem} \label{Th.Main}
Let $I\subset R=K[x_1, \ldots, x_n]$ be a  monomial ideal, $\mathfrak{m}=(x_1, \ldots, x_n)$, $t$ a positive integer,  and $y_1, \ldots, y_s$ be distinct variables in $R$  such that, 
for each  $i=1, \ldots, s$, $\mathfrak{m}\setminus y_i \notin \mathrm{Ass}(R/(I\setminus y_i)^t)$, where $I\setminus y_i$ denotes the  deletion of $I$ at $y_i$. 
 Then  $\mathfrak{m}\in \mathrm{Ass}(R/I^t)$ if and only if $\mathfrak{m}\in \mathrm{Ass}(R/(I^t:\prod_{i=1}^sy_i))$. 
\end{theorem}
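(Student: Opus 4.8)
The plan is to pass from $I^t$ to $(I^t:\prod_{i=1}^{s}y_i)$ by dividing out one variable $y_i$ at a time, each step being controlled by the Fact~\ref{fact2} short exact sequence together with Fact~\ref{fact3}. Concretely, set $P_0=1$, $P_k=y_1\cdots y_k$ for $1\le k\le s$, and $J_k=(I^t:_R P_k)$, so that $J_0=I^t$, $J_s=(I^t:\prod_{i=1}^{s}y_i)$, and $J_k=(J_{k-1}:_R y_k)$. It then suffices to show, for each $k\in\{1,\dots,s\}$, that $\mathfrak{m}\in\mathrm{Ass}(R/J_{k-1})$ if and only if $\mathfrak{m}\in\mathrm{Ass}(R/J_k)$; composing these $s$ equivalences proves the theorem. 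One direction is immediate: if $\mathfrak{m}=(J_k:_R w)$ for a monomial $w$, then $\mathfrak{m}=(J_{k-1}:_R y_kw)$, so $\mathfrak{m}\in\mathrm{Ass}(R/J_{k-1})$.

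The crux is the assertion that $\mathfrak{m}\notin\mathrm{Ass}(R/(J_{k-1},y_k))$ for every $k$. Granting it, the remaining direction follows at once: if $y_k\in J_{k-1}$ then $(J_{k-1},y_k)=J_{k-1}$, so the assertion already gives $\mathfrak{m}\notin\mathrm{Ass}(R/J_{k-1})$ and there is nothing to prove; otherwise $y_k\notin J_{k-1}$, and Fact~\ref{fact2} (with $x=y_k$, applied to $J_{k-1}$) produces the exact sequence $0\to R/J_k\to R/J_{k-1}\to R/(J_{k-1},y_k)\to 0$, whence Fact~\ref{fact3} gives $\mathrm{Ass}(R/J_{k-1})\subseteq\mathrm{Ass}(R/J_k)\cup\mathrm{Ass}(R/(J_{k-1},y_k))$, and $\mathfrak{m}\in\mathrm{Ass}(R/J_{k-1})$ forces $\mathfrak{m}\in\mathrm{Ass}(R/J_k)$.

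To prove the assertion I would reduce modulo $y_k$. Write $R_k:=R/(y_k)$, a polynomial ring over $K$ in the remaining variables whose graded maximal ideal is $\mathfrak{m}\setminus y_k$, and let $\pi_k: R\to R_k$ be the projection. Then $R/(J_{k-1},y_k)\cong R_k/\pi_k(J_{k-1})$, and since $y_k$ annihilates this module, $\mathfrak{m}\in\mathrm{Ass}_R(R/(J_{k-1},y_k))$ is equivalent to $\mathfrak{m}\setminus y_k\in\mathrm{Ass}_{R_k}(R_k/\pi_k(J_{k-1}))$. Next I would compute $\pi_k(J_{k-1})$: as $\pi_k$ is a surjective ring map, $\pi_k(I^t)=\pi_k(I)^t=(I\setminus y_k)^t$, and because $P_{k-1}=y_1\cdots y_{k-1}$ involves no $y_k$, a direct check on monomial generators (using that $(A:_R b)$ is generated by $\{a/\gcd(a,b):a\in\mathcal{G}(A)\}$ for a monomial $b$, and that applying $\pi_k$ to a monomial ideal simply discards the generators divisible by $y_k$) shows $\pi_k(A:_R P_{k-1})=(\pi_k(A):_{R_k}P_{k-1})$ for any monomial ideal $A$. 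Hence $\pi_k(J_{k-1})=((I\setminus y_k)^t:_{R_k}P_{k-1})$. Finally, if $\mathfrak{m}\setminus y_k$ were associated to $R_k/((I\setminus y_k)^t:_{R_k}P_{k-1})$, say $\mathfrak{m}\setminus y_k=\bigl(((I\setminus y_k)^t:_{R_k}P_{k-1}):_{R_k}w\bigr)$ for a monomial $w\in R_k$, then $\mathfrak{m}\setminus y_k=((I\setminus y_k)^t:_{R_k}P_{k-1}w)$, so $\mathfrak{m}\setminus y_k\in\mathrm{Ass}_{R_k}(R_k/(I\setminus y_k)^t)$, contradicting the hypothesis. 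This establishes the assertion.

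I expect the only delicate point to be the monomial bookkeeping in the third step: verifying that deletion at $y_k$ commutes with taking $t$-th powers and with forming the colon against the monomial $P_{k-1}$ (where it is essential that $P_{k-1}$ does not involve $y_k$), and the standard identification of $\mathrm{Ass}_R$ with $\mathrm{Ass}_{R_k}$ for modules annihilated by $y_k$. Everything else is formal, but these identities are precisely what allow the hypothesis --- which concerns only the $t$-th power $(I\setminus y_i)^t$, and not any colon ideal --- to be invoked at the last step.
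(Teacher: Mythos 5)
Your proof is correct and follows essentially the same route as the paper's: it peels off one variable $y_k$ at a time via the exact sequence of Facts~\ref{fact2} and \ref{fact3}, and shows $\mathfrak{m}\notin\mathrm{Ass}\bigl(R/((I^t:y_1\cdots y_{k-1}),y_k)\bigr)$ by identifying that ideal modulo $y_k$ with $((I\setminus y_k)^t:y_1\cdots y_{k-1})$ --- the paper's identity $((I^t:M),y_s)=(((I\setminus y_s)^t:M),y_s)$ in another guise --- and then using $\mathrm{Ass}(R/(J:f))\subseteq\mathrm{Ass}(R/J)$ together with the hypothesis. The only cosmetic difference is that you work in the quotient ring $R/(y_k)$ with the standard correspondence of associated primes, where the paper invokes its Theorem~\ref{disjoint} on ideals in disjoint sets of variables; you are in fact slightly more careful about the degenerate case $y_k\in(I^t:y_1\cdots y_{k-1})$, where Fact~\ref{fact2} does not literally apply.
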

\begin{proof}
We first assume that 
$\mathfrak{m}\in \mathrm{Ass}(R/(I^t:\prod_{i=1}^sy_i))$. 
Based on Fact~\ref{fact2}, we get  the following short exact sequence
$$0\longrightarrow R/(I^t:\prod_{i=1}^sy_i)\stackrel{\psi}{\longrightarrow} R/I^t \stackrel{\phi}{\longrightarrow} R/(I^t, \prod_{i=1}^sy_i) \longrightarrow  0,$$
where $\psi(\overline{r})=\overline{r}\prod_{i=1}^sy_i$ and $\phi(\overline{r})=\overline{r}$. In addition, by virtue of Fact~\ref{fact3}, one can derive that 
\begin{equation}
\mathrm{Ass}(R/(I^t:\prod_{i=1}^sy_i))\subseteq \mathrm{Ass}(R/I^t) \subseteq \mathrm{Ass}(R/(I^t:\prod_{i=1}^sy_i)) \cup \mathrm{Ass}(R/(I^t, \prod_{i=1}^sy_i)). \label{12}
\end{equation}
Since  $\mathfrak{m}\in \mathrm{Ass}(R/(I^t:\prod_{i=1}^sy_i))$, we get  
$\mathfrak{m}\in \mathrm{Ass}(R/I^t)$. 
To establish the converse implication, let $\mathfrak{m}\in \mathrm{Ass}(R/I^t)$.  Our aim is to prove that  
$\mathfrak{m}\in \mathrm{Ass}(R/(I^t:\prod_{i=1}^sy_i))$. 
Proceed  by  induction on $s$.  
Let $J:=I\setminus y_1$. Because the generators of $J^t$ are precisely the generators of $I^t$ that are not divisible by $y_1$, this implies that $(I^t,y_1)=(J^t,y_1)$. By  assumption, one has $\mathfrak{m}\setminus y_1 \notin \mathrm{Ass}(R/J^t)$. Also, it follows from Theorem \ref{disjoint}  that  $\mathfrak{p}\in \mathrm{Ass}(R/(J^t, y_1))$ if and only if $\mathfrak{p}=(\mathfrak{p}_1, y_1)$, 
where  $\mathfrak{p}_1 \in \mathrm{Ass}(R/J^t)$.
 We thus have $\mathfrak{m}\notin \mathrm{Ass}(R/(J^t, y_1))$, and so   $\mathfrak{m}\notin \mathrm{Ass}(R/(I^t, y_1))$. Also, (\ref{12}) yields that  
$\mathrm{Ass}(R/I^t) \subseteq \mathrm{Ass}(R/(I^t:y_1)) \cup \mathrm{Ass}(R/(I^t, y_1))$. Accordingly, we obtain 
 $\mathfrak{m}\in \mathrm{Ass}(R/(I^t:y_1))$. 
Hence, the claim is true for the case in which $s=1$. Now, 
suppose  that the assertion has been shown  for a product of $s-1$ variables, and that $\mathfrak{m}\in \mathrm{Ass}(R/I^t)$. Set $M:=\prod_{i=1}^{s-1}y_i$. One can deduce from the induction that  
$\mathfrak{m}\in \mathrm{Ass}(R/(I^t:M))$. Moreover,    
\begin{equation}
 \mathrm{Ass}(R/(I^t:M))\subseteq \mathrm{Ass}(R/((I^t:M):y_s) )\cup \mathrm{Ass}(R/((I^t:M),y_s)).\label{13}
\end{equation} 
Let  $K:=I\setminus y_s$. It remains to check that  
$((I^t:M),y_s)=((K^t:M),y_s).$ Since $K\subseteq I$, one can easily conclude that  $((K^t:M),y_s) \subseteq ((I^t:M),y_s).$ 
To prove the reverse inclusion,  take a monomial $u$ in 
$((I^t:M),y_s)$. If $y_s\mid u$, then $u\in  ((K^t:M),y_s),$ 
and the argument is done. If $y_s\nmid u$, then 
$u\in  (I^t:M),$ and hence $uM\in I^t$. This yields that there exists a monomial $f\in \mathcal{G}(I^t)$ such that $f\mid uM$. Because $y_s\nmid u$ and $y_s \nmid M$, this implies that $y_s\nmid f$, and so $f\in K^t$. We therefore get $u\in (K^t:M)$, and thus  $((I^t:M),y_s) \subseteq ((K^t:M),y_s).$ 
It follows also from our assumption that $\mathfrak{m}\setminus y_s \notin \mathrm{Ass}(R/K^t)$. As  $\mathrm{Ass}(R/(K^t:M)) \subseteq \mathrm{Ass}(R/K^t)$, one can conclude that  $\mathfrak{m}\setminus y_s \notin \mathrm{Ass}(R/(K^t:M))$. Due to Theorem \ref{disjoint}, we obtain  
 $\mathfrak{p}\in \mathrm{Ass}(R/((K^t:M), y_s))$ 
if and only if $\mathfrak{p}=(\mathfrak{p}_1, y_s)$, 
where  $\mathfrak{p}_1 \in \mathrm{Ass}(R/(K^t:M))$.
This gives rise to 
$\mathfrak{m}\notin \mathrm{Ass}(R/((K^t:M), y_s))$, and so $\mathfrak{m}\notin\mathrm{Ass}(R/((I^t:M), y_s)).$ 
In light of $(\ref{13})$, one has 
$\mathfrak{m}\in \mathrm{Ass}(R/((I^t:M): y_s))=
\mathrm{Ass}(R/(I^t:\prod_{i=1}^sy_i))$.  
\end{proof}
%+++++++++++++++++++++++++++++++++++
%+++++++++++++++++++++++++++++++++++
 We are ready to express one of the main results of this section in the subsequent corollary.

 \begin{corollary}\label{Main.Corollary}
 Let $I\subset  R=K[x_1, \ldots, x_n]$ be a square-free  monomial ideal,  $\mathfrak{m}=(x_1, \ldots, x_n)$,   
 and  $\{u_1, \ldots, u_{\beta_1(I)}\}$  be a maximal independent set of minimal generators of $I$ such that  $\mathfrak{m}\setminus x_i \notin \mathrm{Ass}(R/(I\setminus x_i)^t)$ for all   $x_i\mid \prod_{i=1}^{\beta_1(I)}u_i$ and some positive integer $t$, where $I\setminus x_i$ denotes the  deletion of $I$ at $x_i$. 
 If $\mathfrak{m}\in \mathrm{Ass}(R/I^t)$, then $t\geq \beta_1(I)+1$. 
 \end{corollary}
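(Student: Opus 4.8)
The plan is to reduce the statement to the equivalence supplied by Theorem~\ref{Th.Main} and then exploit the numerical constraint coming from the degree of a corner-type colon element. Write $\{u_1,\ldots,u_{\beta_1(I)}\}$ for the given maximal independent set of minimal generators, and let $y_1,\ldots,y_s$ be the distinct variables dividing $w:=\prod_{i=1}^{\beta_1(I)}u_i$. By hypothesis, $\mathfrak{m}\setminus y_i\notin\mathrm{Ass}(R/(I\setminus y_i)^t)$ for each $i$, so Theorem~\ref{Th.Main} applies and yields
$$\mathfrak{m}\in\mathrm{Ass}(R/I^t)\iff \mathfrak{m}\in\mathrm{Ass}\bigl(R/(I^t:\textstyle\prod_{i=1}^s y_i)\bigr).$$
First I would use the left-hand side (our assumption $\mathfrak{m}\in\mathrm{Ass}(R/I^t)$) to obtain from the right-hand side a monomial $v$ with $\mathfrak{m}=(I^t:v\prod_{i=1}^s y_i)$; equivalently, setting $z:=v\prod_{i=1}^s y_i$, we have $z\notin I^t$ but $x_j z\in I^t$ for every $j$. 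In particular $\prod_{i=1}^s y_i$ divides $z$.

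Next I would extract the degree obstruction. Since $x_j z\in I^t$ for all $j$, choosing $j$ to be (the index of) any variable appearing in $z$ we can write $x_j z = m_1\cdots m_t\cdot h$ for suitable $m_1,\ldots,m_t\in\mathcal{G}(I)$ and a monomial $h$; the key point is that because $I$ is square-free, no single variable can ``absorb'' an entire generator, so in passing from $x_j z$ back down toward $z\notin I^t$ one controls how the generators $u_1,\ldots,u_{\beta_1(I)}$ must all appear. Concretely, I expect to show that $u_1\cdots u_{\beta_1(I)}$ divides $x_j z$ for an appropriate $j$, using that the $u_i$ have pairwise disjoint supports (independence) so their product is their least common multiple and each $u_i$, being a minimal generator of $I$ and hence of $I^t$'s ``building blocks,'' must contribute. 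Because the $u_i$ are pairwise coprime, $z$ being divisible by $\prod y_i$ (the whole variable-support of $w$) forces $w=u_1\cdots u_{\beta_1(I)}$ to divide $z$ or to divide $x_jz$ with only one extra variable's worth of slack.

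Finally I would run the counting argument. If $t\le\beta_1(I)$, then $z$ would itself be (up to the at most one extra variable introduced by multiplying by some $x_j$, which is then removed) a product involving the $\beta_1(I)\ge t$ pairwise-disjoint generators $u_1,\ldots,u_{\beta_1(I)}$, and selecting any $t$ of them exhibits $z\in I^t$ — contradicting $z\notin I^t$. Hence $t\ge\beta_1(I)+1$. The main obstacle is the bookkeeping in the previous paragraph: one must argue carefully that the colon element $z$, after clearing the variables $y_i$, genuinely ``sees'' all $\beta_1(I)$ independent generators rather than merely $s$ variables, and that multiplication by a single $x_j$ cannot be cleverly redistributed to avoid this. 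I would handle this by working variable by variable through $\mathrm{supp}(w)$, repeatedly invoking Lemma~\ref{Kaplansky} (to strip variables outside the relevant supports) and Proposition~\ref{reduction} (to keep $\mathrm{supp}(v)$ inside $\bigcup_{u\in\mathcal{G}(I)}\mathrm{supp}(u)$), exactly as in the proof of Theorem~\ref{Th.Main}, so that the square-free hypothesis does the rest.
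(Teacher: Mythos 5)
Your proposal is correct and follows essentially the paper's route: reduce via Theorem~\ref{Th.Main} to $\mathfrak{m}\in\mathrm{Ass}\bigl(R/(I^t:\prod_{i=1}^s y_i)\bigr)$ and then get a contradiction when $t\le\beta_1(I)$. The only remark is that the ``bookkeeping'' you flag as the main obstacle is immediate: since the $u_i$ are square-free and pairwise coprime, $\prod_{i=1}^{\beta_1(I)}u_i$ is exactly $\prod_{i=1}^s y_i$ and lies in $I^{\beta_1(I)}\subseteq I^t$ whenever $t\le\beta_1(I)$, so $(I^t:\prod_{i=1}^s y_i)=R$ has no associated primes --- no corner-element degree argument or variable-by-variable use of Lemma~\ref{Kaplansky} is needed, and this is precisely the paper's one-line finish.
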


\begin{proof}
To simplify the notation, set $\ell:=\beta_1(I)$. Let 
$\mathfrak{m}\in \mathrm{Ass}(R/I^t)$. 
Without loss of generality, one may assume that 
$\prod_{i=1}^{\ell}u_i=\prod_{i=1}^rx_i$.
In view of Theorem \ref{Th.Main}, we can conclude that 
$\mathfrak{m}\in \mathrm{Ass}(R/(I^t:\prod_{i=1}^rx_i)).$
  Since  $\prod_{i=1}^\ell u_i\in I^\ell$,  we get 
  $\prod_{i=1}^rx_i\in I^\ell$. If $t\leq \ell$, then 
 $(I^t:\prod_{i=1}^rx_i)=R$, which contradicts the fact that 
 $\mathfrak{m}\in \mathrm{Ass}(R/(I^t:\prod_{i=1}^rx_i)).$ This implies that $t\geq \ell +1$, as required.  
 \end{proof}

%+++++++++++++++++++++++++++++++++++
%+++++++++++++++++++++++++++++++++++

We apply  Corollary \ref{Main.Corollary} to study 
K$\mathrm{\ddot{o}}$nig ideals.

\begin{theorem} \label{Application}
Let $I$ be an unmixed K$\ddot{o}$nig ideal in the polynomial ring $R=K[x_1, \ldots, x_n]$ over a field $K$,   $\mathfrak{m}=(x_1, \ldots, x_n)$, and  $\{u_1, \ldots, u_{\beta_1(I)}\}$  be a maximal independent set of minimal generators of $I$ such that  $\mathfrak{m}\setminus x_i \notin \mathrm{Ass}(R/(I\setminus x_i)^t)$ for all $t$ and $x_i\mid \prod_{i=1}^{\beta_1(I)}u_i$.  Then the following statements hold:
\begin{itemize}
\item[(i)] $I$  is normally torsion-free. 
\item[(ii)]  $I$ is normal.
\item[(iii)]  $I$ has the strong persistence proeprty. 
\item[(iv)]  $I$ has the persistence property. 
\item[(v)]  $I$ has the symbolic strong persistence property. 
\end{itemize}

\end{theorem}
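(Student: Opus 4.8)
The plan is to establish (i) first and then deduce (ii)--(v) from it by standard facts. For the deductions: once $I$ is normally torsion-free, $I^{(k)}=I^{k}$ for all $k$, and since for a square-free monomial ideal $I^{(k)}=\bigcap_{P\in\mathrm{Min}(I)}P^{k}$ with each $P^{k}$ (a power of a monomial prime) integrally closed, $I^{k}=I^{(k)}$ is an intersection of integrally closed ideals, hence integrally closed, which gives (ii). If $I$ is normal and $fI\subseteq I^{k+1}$, then at every Rees valuation $v$ of $I$ one gets $v(f)\ge k\,v(I)$ (pick $a\in I$ with $v(a)=v(I)$ and use $fa\in I^{k+1}\subseteq\overline{I^{k+1}}$), so $f\in\overline{I^{k}}=I^{k}$; thus $(I^{k+1}:I)=I^{k}$, which is (iii). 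Statement (iv) is then immediate (strong persistence implies persistence, and anyway $\mathrm{Ass}(R/I^{k})=\mathrm{Min}(I)$ for every $k$ by (i)), and (v) follows because $I^{(k)}=I^{k}$ turns the identity $(I^{(k+1)}:I^{(1)})=I^{(k)}$ into the one established in (iii).

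For (i), the plan is an induction on $n=\dim R$. Since $I$ is K\"onig, $\beta_{1}(I)=\operatorname{height}(I)=:c$, and since $I$ is unmixed every $P\in\mathrm{Ass}(R/I)=\mathrm{Min}(I)$ has height $c$; so $c<n$ unless $I=\mathfrak m$, a trivial case. I would argue by contradiction: suppose some $I^{t}$ has an embedded associated prime $\mathfrak p$. Associated primes of monomial ideals are monomial primes, so if $\mathfrak p\neq\mathfrak m$ one may choose a variable $x_{j}\notin\mathfrak p$; since $\mathfrak p\supseteq I^{t}$ this forces $x_{j}\notin I$, and $\mathfrak p$ contains a minimal prime of $I$ avoiding $x_{j}$, so the contraction of $I$ at $x_{j}$ is proper. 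Localizing at $x_{j}$ (Fact~\ref{fact4}), $\mathfrak p$ becomes an embedded associated prime of the $t$-th power of the contraction $\widetilde I\subset K[x_{l}:l\neq j]$ obtained from $I$ by setting $x_{j}=1$. One checks that $\widetilde I$ is again unmixed (its minimal primes are the $Q\in\mathrm{Min}(I)$ with $x_{j}\notin Q$, all of height $c$) and K\"onig ($c$ pairwise coprime minimal generators of $I$ stay pairwise coprime after setting $x_{j}=1$, and the height remains $c$), and, with the induced maximal independent set, that $\widetilde I$ inherits the hypothesis on the maximal ideals of its deletions, using that deletion and contraction commute and that the hypothesis is assumed for \emph{all} powers. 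The induction hypothesis then makes $\widetilde I$ normally torsion-free, contradicting that $\mathfrak p$ is embedded. Hence $\mathfrak m$ is the only possible embedded prime.

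It remains to rule out $\mathfrak m\in\mathrm{Ass}(R/I^{t})$; this is the crux. Assume $\mathfrak m\in\mathrm{Ass}(R/I^{t})$ with $t$ minimal. By Corollary~\ref{Main.Corollary}, $t\ge\beta_{1}(I)+1=c+1$. Set $w:=u_{1}\cdots u_{c}$, a square-free monomial lying in $I^{c}$ whose support is exactly $\{x_{i}:x_{i}\mid u_{1}\cdots u_{c}\}$. Applying Theorem~\ref{Th.Main} to the variables dividing $w$ (for which the deletion hypothesis holds) gives $\mathfrak m\in\mathrm{Ass}\bigl(R/(I^{t}:w)\bigr)$. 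The key identity to prove is $(I^{t}:w)=I^{t-c}$: the inclusion $\supseteq$ is clear from $w\in I^{c}$, and for $\subseteq$ one uses that $u_{1},\dots,u_{c}$ are \emph{pairwise coprime} minimal generators, so whenever a product of $t$ generators of $I$ divides $v\,u_{1}\cdots u_{c}$ one may cancel $c$ of these factors against $u_{1},\dots,u_{c}$ and conclude $v\in I^{t-c}$; this is where ``unmixed'' and ``K\"onig'' are used essentially. Granting the identity, $\mathfrak m\in\mathrm{Ass}(R/I^{t-c})$ with $0<t-c<t$ contradicts the minimality of $t$. Hence $\mathfrak m\notin\mathrm{Ass}(R/I^{t})$ for all $t$, and together with the previous paragraph this yields $\mathrm{Ass}(R/I^{t})=\mathrm{Min}(I)$ for every $t$, proving (i).

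I expect the main obstacle to be the inheritance of the deletion hypothesis under contraction in the inductive step: the hypothesis on $I$ only controls the \emph{full} graded maximal ideal of each $K[x_{l}:l\neq i]$, whereas passing to $\widetilde I$ naturally leads one to control a \emph{smaller} monomial prime, so either additional bookkeeping or a strengthening of the inductive statement to a suitable minor-closed family will be needed. The identity $(I^{t}:w)=I^{t-c}$ is a ``directional'' form of strong persistence along the independent set $\{u_{1},\dots,u_{c}\}$ and should be more routine, though it too invokes the K\"onig/unmixed structure.
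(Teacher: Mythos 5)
Your derivations of (ii)--(v) from (i) are fine (they essentially reprove the facts the paper quotes), and for the maximal ideal your skeleton matches the paper's: Theorem~\ref{Th.Main} plus Corollary~\ref{Main.Corollary}, a colon identity, and minimality of $t$. The genuine gap is the step you call routine. The identity $(I^{t}:u_{1}\cdots u_{c})=I^{t-c}$ is not a formal consequence of the $u_{i}$ being pairwise coprime generators: if $g_{1}\cdots g_{t}\mid v\,u_{1}\cdots u_{c}$, a single $g_{i}$ may share variables with several $u_{j}$ and with $v$, and there is no canonical way to ``cancel $c$ factors''. What the unmixed K\"onig hypothesis actually gives --- this is \cite[Proposition 3.8]{HM}, which the paper invokes --- is $(I^{t}:u_{1}\cdots u_{c})=I^{(t-c)}$, the \emph{symbolic} power; so your identity holds at a given $t$ if and only if $I^{t-c}$ has no embedded primes. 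It is therefore false in general: there are unmixed K\"onig ideals that are not normally torsion-free (e.g.\ the edge ideal of a whiskered triangle, which is unmixed and K\"onig but non-bipartite, hence not normally torsion-free by \cite{SVV}), and for such an ideal, choosing $t-c$ to be a power with embedded primes makes $(I^{t}:u_{1}\cdots u_{c})=I^{(t-c)}\supsetneq I^{t-c}$. The paper closes exactly this hole by taking $t$ minimal with respect to the existence of \emph{any} embedded prime, so that $I^{t-\ell}=I^{(t-\ell)}$ by minimality, and then substituting into \cite[Proposition 3.8]{HM}. Your $t$ is minimal only with respect to $\mathfrak m\in\mathrm{Ass}(R/I^{t})$, which does not give $I^{(t-c)}=I^{t-c}$ unless you have already shown that no non-maximal embedded primes can occur.

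That prior reduction is your second gap, and you flag it yourself: your induction on $n$ needs the deletion hypothesis (on the full graded maximal ideal of each $K[x_{l}:l\neq i]$, for all powers) to pass to the contraction $\widetilde I$, and this inheritance is not established; it is not a side issue, because the symbolic-to-ordinary step above depends on knowing that $\mathfrak m$ is the only possible embedded prime. The paper avoids the induction on $n$ altogether: it localizes at the embedded prime via Fact~\ref{fact4} and assumes $\mathfrak p=\mathfrak m$ (tersely), then runs the minimal-$t$ argument once. The least-effort repair of your proposal is to drop the direct ``cancellation'' proof of the colon identity, take $t$ minimal such that $I^{t}$ has \emph{some} embedded prime, reduce to $\mathfrak p=\mathfrak m$ as in the paper, use minimality to conclude $I^{t-c}=I^{(t-c)}$, and then quote (or prove) the symbolic identity $(I^{t}:u_{1}\cdots u_{c})=I^{(t-c)}$ of H\`a--Morey.
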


\begin{proof}
(i) For convenience of notation, put $\ell:=\beta_1(I)$. Suppose, on the contrary, that   $I$ is not normally torsion-free. Thus, there exists a positive integer $t$ such  that $I^t$ has embedded primes. We consider $t$ minimal with respect to this property.  
Let $\mathfrak{p}$ be an embedded prime of $I^t$. In view of Fact~\ref{fact4}, we get  
  $\mathfrak{p} \in \mathrm{Ass} (R/I^t)$ if and only if  
$\mathfrak{p}R_{\mathfrak{p}} \in \mathrm{Ass} (R_{\mathfrak{p}}/(IR_{\mathfrak{p}})^t)$.  This enables us to assume that $\mathfrak{p}=\mathfrak{m}$. 
Without loss of generality, one may assume that 
$\prod_{i=1}^{\ell}u_i=\prod_{i=1}^rx_i$. In  light of Theorem \ref{Th.Main}, one  has  $\mathfrak{m}\in \mathrm{Ass}(R/(I^t:\prod_{i=1}^rx_i))$, and so $\mathfrak{m}\in \mathrm{Ass}(R/(I^t:\prod_{i=1}^{\ell}u_i))$. 
On the other hand, since $\mathfrak{m}\setminus x_i \notin \mathrm{Ass}(R/(I\setminus x_i)^t)$ for each $i=1, \ldots, r$,  we deduce from Corollary \ref{Main.Corollary} that $t\geq \ell +1$. Because  $t$ is minimal,   $I^{t-\ell}$ has no embedded primes, and hence $I^{t-\ell}=I^{(t-\ell)}$. It follows now from \cite[Proposition 3.8]{HM} that $(I^t:\prod_{i=1}^{\ell}u_i)=I^{t-\ell}$. We therefore obtain $\mathfrak{m}\in \mathrm{Ass}(R/I^{t-\ell})$, which contradicts the fact that $I^{t-\ell}$ has no embedded primes. Consequently, $I$ is a normally torsion-free square-free monomial ideal. \par 
(ii) Based on    \cite[Theorem 1.4.6]{HH1}, every normally torsion-free square-free monomial ideal is normal. Now, the  assertion can be deduced from (i).  \par 
(iii) By   \cite[Theorem 6.2]{RNA}, every normal monomial ideal has the strong persistence property, and so the claim  follows readily from (ii). \par 
(iv)  By  \cite[Proposition 2.9]{N1}, the strong persistence property implies the persistence property. Hence, one  can    obtain the assertion from (iii). \par 
(v)  In view of  \cite[Theorem 11]{RT}, the strong persistence property implies the symbolic strong persistence property, and  thus the claim is  an immediate consequence of (iii).  
 \end{proof}

%+++++++++++++++++++++++++++++++++++
%+++++++++++++++++++++++++++++++++++
We are in a position to state the other main result of this section in the following theorem.  

%+++++++++++++++++++++++++++++++++++
%+++++++++++++++++++++++++++++++++++

\begin{theorem} \label{Th.Main2}
Let $I$ be a square-free  monomial ideal in a polynomial ring $R=K[x_1, \ldots, x_n]$ over a field $K$ and $\mathfrak{m}=(x_1, \ldots, x_n)$. If there exists a square-free monomial  $v \in I$ such that $v\in \mathfrak{p}\setminus \mathfrak{p}^2$ for any $\mathfrak{p}\in \mathrm{Min}(I)$, and $\mathfrak{m}\setminus x_i \notin \mathrm{Ass}(R/(I\setminus x_i)^s)$ for all $s$ and $x_i \in \mathrm{supp}(v)$, then the following statements hold:
\begin{itemize}
\item[(i)] $I$  is normally torsion-free. 
\item[(ii)]  $I$ is normal.
\item[(iii)]  $I$ has the strong persistence proeprty. 
\item[(iv)]  $I$ has the persistence property. 
\item[(v)]  $I$ has the symbolic strong persistence property. 
\end{itemize}
\end{theorem}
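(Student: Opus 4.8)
The plan is to reduce everything to part (i), since parts (ii)--(v) follow from (i) by exactly the same chain of citations used in the proof of Theorem \ref{Application}: \cite[Theorem 1.4.6]{HH1} gives normality, \cite[Theorem 6.2]{RNA} gives the strong persistence property, \cite[Proposition 2.9]{N1} gives the persistence property, and \cite[Theorem 11]{RT} gives the symbolic strong persistence property. So the real content is to show that $I$ is normally torsion-free under the stated hypothesis.

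For (i), I would argue by contradiction in the style of Theorem \ref{Application}(i). Suppose $I$ is not normally torsion-free and pick the minimal $t$ for which $I^t$ has an embedded prime $\mathfrak{p}$. Using Fact~\ref{fact4} (localization at $\mathfrak{p}$), one reduces to the case $\mathfrak{p}=\mathfrak{m}$, so $\mathfrak{m}\in\mathrm{Ass}(R/I^t)$. Write $v=\prod_{x_i\in\mathrm{supp}(v)}x_i$ and let $s:=|\mathrm{supp}(v)|$. By hypothesis $\mathfrak{m}\setminus x_i\notin\mathrm{Ass}(R/(I\setminus x_i)^t)$ for every $x_i\in\mathrm{supp}(v)$, so Theorem~\ref{Th.Main} applies with $\{y_1,\dots,y_s\}=\mathrm{supp}(v)$ and yields $\mathfrak{m}\in\mathrm{Ass}\bigl(R/(I^t:v)\bigr)$. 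The key point is then to identify $(I^t:v)$: since $v\in\mathfrak{p}\setminus\mathfrak{p}^2$ for every $\mathfrak{p}\in\mathrm{Min}(I)$ and (by minimality of $t$) $I^{t-1}$ has no embedded primes, so $I^{t-1}=I^{(t-1)}$, one should get $(I^t:v)=I^{t-1}$. This would give $\mathfrak{m}\in\mathrm{Ass}(R/I^{t-1})$, contradicting that $I^{t-1}$ has no embedded primes (note $t\geq 2$ because $I$ square-free forces $\mathrm{Ass}(R/I)=\mathrm{Min}(I)$, so $t=1$ is impossible), and we are done.

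The main obstacle is establishing the colon formula $(I^t:v)=I^{t-1}$ from the condition $v\in\mathfrak{p}\setminus\mathfrak{p}^2$ for all minimal primes $\mathfrak{p}$. The inclusion $I^{t-1}\subseteq(I^t:v)$ is immediate since $v\in I$. For the reverse inclusion one works with the symbolic powers: because $I^{t-1}=I^{(t-1)}=\bigcap_{\mathfrak{p}\in\mathrm{Min}(I)}\mathfrak{p}^{(t-1)}R$ localized appropriately, and $v\in\mathfrak{p}R_\mathfrak{p}\setminus\mathfrak{p}^2R_\mathfrak{p}$ means $v$ is part of a minimal generating set (in fact a nonzerodivisor times a regular parameter) in each $R_\mathfrak{p}$, one checks locally at each minimal prime $\mathfrak{p}$ that $g v\in I^tR_\mathfrak{p}=\mathfrak{p}^tR_\mathfrak{p}$ forces $g\in\mathfrak{p}^{t-1}R_\mathfrak{p}$; intersecting over all $\mathfrak{p}\in\mathrm{Min}(I)$ recovers $g\in I^{(t-1)}=I^{t-1}$. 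Alternatively, one can try to invoke \cite[Proposition 3.8]{HM} or a Rees-algebra/associated-graded argument as in the König case, since $v\notin\mathfrak{p}^2$ plays exactly the role that the product of an independent set plays there. Once this colon identity is in hand, the rest is the bookkeeping above and the citation chain for (ii)--(v).
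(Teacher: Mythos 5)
Your proposal follows essentially the same route as the paper's proof: contradiction with $t$ minimal, localization via Fact~\ref{fact4} to reduce to $\mathfrak{m}$, Theorem~\ref{Th.Main} applied to $\mathrm{supp}(v)$ to get $\mathfrak{m}\in\mathrm{Ass}(R/(I^t:v))$, and the colon identity $(I^t:v)=I^{t-1}$ obtained by showing $f\in\mathfrak{p}^{t-1}$ for every $\mathfrak{p}\in\mathrm{Min}(I)$ (your localization at $\mathfrak{p}$ is just a repackaging of the paper's observation that $\mathfrak{p}$ contains exactly one variable dividing $v$), then intersecting to land in $I^{(t-1)}=I^{t-1}$ by minimality of $t$, exactly as the paper does via \cite[Proposition 4.3.25]{V1}; the citation chain for (ii)--(v) is also identical.
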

\begin{proof}
(i) Suppose, on the contrary, that $I$ is not normally torsion-free. 
Let $t$ be minimal such that $I^t$ has embedded prime ideals. 
 Since $I$ is a square-free monomial ideal, this implies that 
$\mathrm{Ass}(R/I)=\mathrm{Min}(I)$, and hence $t\geq 2$. 
Take an arbitrary $\mathfrak{p}\in \mathrm{Min}(I)$. In the sequel, we show that  $(I^t:v)=I^{t-1}$. Because   $v\in I$, one must have  $I^{t-1}\subseteq (I^t:v)$. To prove the reverse inclusion, select a monomial $f$ in $(I^t:v)$. Thus, $fv\in I^t$, and so there exist some monomials $g_1, \ldots, g_t \in \mathcal{G}(I)$ and some monomial $h$ in $R$ such that $fv=g_1 \cdots g_t h$. Due to $\mathfrak{p}\in \mathrm{Min}(I)$, this yields that $fv\in \mathfrak{p}^t$. Since $v\in \mathfrak{p}\setminus \mathfrak{p}^2$, one can conclude that $\mathfrak{p}$ contains exactly one variable that divides $v$. This gives that $f\in \mathfrak{p}^{t-1}$. Because  $\mathfrak{p}$ is arbitrary, one obtains that  $f\in \cap_{\mathfrak{p}\in \mathrm{Min}(I)}\mathfrak{p}^{t-1}$. Applying  \cite[Proposition 4.3.25]{V1},  $f\in I^{(t-1)}$.  Since $t$ is minimal such that $I^t$ has embedded prime ideals, this yields that $I^{t-1}$ has no embedded prime ideals, and so $I^{(t-1)}=I^{t-1}$. 
We thus gain $f\in I^{t-1}$. Consequently, $(I^t:v)\subseteq I^{t-1}$, and hence  $(I^t:v)= I^{t-1}$. Let $\mathfrak{q}$ be an embedded prime ideal of $I^t$. It follows from Fact~\ref{fact4} that $\mathfrak{q} \in \mathrm{Ass} (R/I^t)$ if and only if  
$\mathfrak{q}R_{\mathfrak{q}} \in \mathrm{Ass} (R_{\mathfrak{q}}/(IR_{\mathfrak{q}})^t)$. We therefore may assume that $\mathfrak{q}=\mathfrak{m}$. Hence, $\mathfrak{m}\in \mathrm{Ass}(R/I^t)$. In view of Theorem \ref{Th.Main}, one derives that 
$\mathfrak{m}\in \mathrm{Ass}(R/(I^t:v))$. We thus get 
$\mathfrak{m}\in \mathrm{Ass}(R/I^{t-1})$. This contradicts our assumption. Accordingly, the square-free monomial ideal $I$ is normally torsion-free. \par 
(ii)-(v) The claims can be concluded by similar  discussions in the proof of Theorem \ref{Application}.
\end{proof}

%+++++++++++++++++++++++++++++++++++
%+++++++++++++++++++++++++++++++++++
To illustrate  Theorem \ref{Th.Main2}, we provide the following example. First we recall  the definition of $t$-spread monomial ideals.

\begin{definition} (see \cite{EHQ}) 
A monomial $x_{i_1}x_{i_2}\cdots x_{i_d}$ with $i_1\leq i_2 \cdots \leq i_d$ is called {\it $t$-spread}  if $i_j-i_{j-1}\geq t$ for $2\leq j \leq d$. Also, a monomial ideal in $R=K[x_1, \ldots, x_n]$ is called a {\it $t$-spread monomial ideal}, if it is generated by $t$-spread monomial ideals.  
\end{definition}

%%%%%%%%%%%%%%%%%%%%%%%%%%%
%%%%%%%%%%%%%%%%%%%%%%%%%%%
\begin{example}
Let $I$ be a square-free monomial ideal in the polynomial ring $ R=K[x_1, \ldots, x_7]$ over a field $K$ with 
$$I = (x_1x_3x_6, x_1x_3x_7, x_1x_4x_6, x_1x_4x_7, 
  x_1x_5x_7, x_2x_4x_7, x_2x_5x_7).$$
Direct computations show that $I$ is a $2$-spread monomial ideal.
 By using Macaulay2 \cite{GS},  we obtain 
$$\mathrm{Ass}(R/I)=\{(x_1,x_2), (x_1,x_7), (x_6, x_7), (x_1, x_4, x_5), (x_3, x_4, x_7), (x_3, x_4, x_5)\}.$$
 Also, note that $I$ is not unmixed. Set $v:=x_1x_3x_6$.  One can easily check that $v\in \mathfrak{p}\setminus \mathfrak{p}^2$ for each $\mathfrak{p}\in \mathrm{Min}(I)$. 
 To complete our argument, one needs to demonstrate that 
  $\mathfrak{m}\setminus x_i \notin \mathrm{Ass}(R/(I\setminus x_i)^s)$ for all $s$ and $i\in \{1,3,6\}$, where 
 $\mathrm{m}=(x_1, \ldots, x_7)$. 
We first note that  $I\setminus x_1=(x_2x_4x_7, x_2x_5x_7)$. Since $(x_2x_4x_7, x_2x_5x_7)=x_2x_7(x_4,x_5)$, it follows from Fact~\ref{fact5} that $(x_2x_4x_7, x_2x_5x_7)$ is normally torsion-free, and so $\mathfrak{m}\setminus x_1 \notin \mathrm{Ass}(R/(I\setminus x_1)^s)$ for all $s$. Next, observe that 
$$F:= I\setminus x_3=(x_1x_4x_6, x_1x_4x_7, x_1x_5x_7, x_2x_4x_7, x_2x_5x_7).$$ 
Once again, by using Macaulay2 \cite{GS}, one has 
$$\mathrm{Ass}(R/(I\setminus x_3))=\{(x_1,x_2),  (x_4, x_5), (x_4, x_7), (x_6, x_7), (x_1, x_7)\}.$$
Let $G=(V(G), E(G))$ be the graph with the vertex set 
$V(G)=\{1,2,4,5,6,7\}$ and the edge set 
$E(G)=\{\{1,2\}, \{4,5\}, \{4,7\}, \{6,7\}, \{1,7\}\}$. It is routine to check that  $G$ is a tree, and  $F$ is its cover ideal. 
By virtue of  \cite[Corollary 2.6]{GRV},  $F$ is normally torsion-free; thus, $\mathfrak{m}\setminus x_3 \notin \mathrm{Ass}(R/(I\setminus x_3)^s)$ for all $s$. Finally, 
notice that 
$$ I\setminus x_6=(x_1x_3x_7, x_1x_4x_7, x_1x_5x_7, x_2x_4x_7, x_2x_5x_7).$$
 Put $ L:=(x_1x_3, x_1x_4, x_1x_5, x_2x_4, x_2x_5).$
 Assume that $H=(V(H), E(H))$ is  the graph with the vertex set 
$V(H)=\{1,2,3,4,5\}$ and the edge set 
$$E(H)=\{\{1,3\}, \{1,4\}, \{1,5\}, \{2,4\}, \{2,5\}\}.$$  $H$ has no odd cycle subgraph, so     $H$ is bipartite. In addition,  $L$ is the edge ideal of $H$, and  so \cite[Theorem 5.9]{SVV} yields that $L$ is normally torsion-free. 
We therefore get $\mathfrak{m}\setminus x_6 \notin \mathrm{Ass}(R/(I\setminus x_6)^s)$ for all $s$.
Due to Fact~\ref{fact5}, one has $I\setminus x_6=x_7L$  is normally torsion-free. It follows now from Theorem \ref{Th.Main2} that $I$ is normally torsion-free, as desired. 
\end{example}
%+++++++++++++++++++++++++++++++++++
%+++++++++++++++++++++++++++++++++++

We terminate this section by giving an application of Theorem 
\ref{Th.Main2}. In fact, we re-prove the fact that every square-free transversal polymatroidal ideal is normally torsion-free, see \cite[Corollary 3.6]{HRV}. 
To accomplish this, one has to recall the following definition.
\begin{definition} (see  \cite{HRV}) 
Let $F$ be a non-empty subset of $[n]=\{1, \ldots, n\}$. We denote by $\mathfrak{p}_F$ the monomial prime ideal $({x_j : j\in  F}).$ 
A  {\it transversal polymatroidal ideal} is an ideal $I$ of the
form $I=\mathfrak{p}_{F_1} \cdots \mathfrak{p}_{F_r},$
where $F_1,\ldots, F_r$ is a collection of non-empty subsets of $[n]$ with $r\geq 1$.   
\end{definition}

%+++++++++++++++++++++++++++++++++++
%+++++++++++++++++++++++++++++++++++
 \begin{theorem}\label{Application2}
Every square-free transversal polymatroidal ideal is normally torsion-free. 
\end{theorem}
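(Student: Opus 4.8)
The plan is to derive the normal torsion-freeness of a square-free transversal polymatroidal ideal $I = \mathfrak{p}_{F_1}\cdots\mathfrak{p}_{F_r}$ from Theorem \ref{Th.Main2} by an induction on $n$ (or on $r$), after first arranging that the hypotheses of that theorem hold for a well-chosen square-free monomial $v$. The first step is to understand $\mathrm{Min}(I)$: since each $\mathfrak{p}_{F_i}$ is prime, a minimal prime of $I$ is a minimal prime over the product, and by a standard combinatorial description the minimal primes of a transversal monomial ideal are exactly the ideals $\mathfrak{p}_{\{j_1,\ldots,j_r\}}$ where $j_i \in F_i$ for each $i$ and the chosen set is minimal with respect to meeting every $F_i$ (i.e.\ it corresponds to a minimal transversal / minimal vertex cover of the associated hypergraph). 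In particular every minimal prime of $I$ has height at most $r$, and for each such $\mathfrak{p}$ and each index $i$ the prime $\mathfrak{p}$ contains at least one variable $x_{j}$ with $j\in F_i$.

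Next I would choose the candidate $v$. Pick $j_i \in F_i$ for $i=1,\ldots,r$ arbitrarily and set $v := \prod_{i=1}^r x_{j_i}$ (a square-free monomial once we collapse repeated variables — more care is needed if the $F_i$ are not disjoint, so it is cleanest to first reduce, via Fact \ref{fact5} and a relabelling, to the situation where we may assume the $x_{j_i}$ are genuinely chosen to make $v$ square-free, or simply to take $v$ to be a single generator $x_{j_1}\cdots x_{j_r}$ of $I$ coming from a system of distinct representatives when one exists). Then $v \in \mathfrak{p}_{F_1}\cdots\mathfrak{p}_{F_r} = I$. The point $v\in \mathfrak{p}\setminus\mathfrak{p}^2$ for every $\mathfrak{p}\in\mathrm{Min}(I)$ should follow because $\mathfrak{p} = \mathfrak{p}_G$ for a minimal transversal $G$, and $v$ meets $G$ in exactly the variables forced by the $F_i$'s that $G$ covers through those coordinates; the key is that $v$, being a product of exactly one variable from each $F_i$, cannot have two of its factors lie in a minimal transversal unless... — this is the spot requiring the genuine combinatorial argument, and I expect it to be the main obstacle: showing that a product of one representative per $F_i$ lands in $\mathfrak{p}\setminus\mathfrak{p}^2$ for every minimal prime $\mathfrak{p}$ amounts to a statement about minimal transversals of hypergraphs and may force either an additional hypothesis or a more clever choice of $v$ (possibly after the Fact \ref{fact5} reduction which lets us strip common variables).

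For the deletion hypothesis, note that $I \setminus x_i$ is again (a monomial multiple of) a square-free transversal polymatroidal ideal in fewer variables: deleting $x_i$ removes the variable $x_i$ from each $F_k$ with $i\in F_k$, so $I\setminus x_i = \prod_{k} \mathfrak{p}_{F_k \setminus \{i\}}$ over those $k$ with $F_k\setminus\{i\}\neq\emptyset$, times possibly the whole ring if some $F_k = \{i\}$ (in which case $I\setminus x_i = 0$ and the condition is vacuous). Hence by the induction hypothesis $I\setminus x_i$ is normally torsion-free, so $\mathrm{Ass}(R/(I\setminus x_i)^s) = \mathrm{Min}(I\setminus x_i)$ for all $s$, and in particular $\mathfrak{m}\setminus x_i = (x_1,\ldots,\widehat{x_i},\ldots,x_n) \notin \mathrm{Ass}(R/(I\setminus x_i)^s)$ unless $I\setminus x_i$ is $\mathfrak{m}\setminus x_i$-primary, which it is not since a transversal polymatroidal ideal in the variables $\neq x_i$ has height at most $r < n-1$ generically (and the degenerate cases are handled directly). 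With all hypotheses of Theorem \ref{Th.Main2} verified for this $v$, conclusion (i) of that theorem gives normal torsion-freeness of $I$, completing the induction; the base case $n=1$ or $r=1$ is immediate since then $I$ is a prime (or a variable times a prime) and prime monomial ideals are trivially normally torsion-free.
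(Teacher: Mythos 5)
Your overall strategy is the same as the paper's (induction on the number of variables, apply Theorem \ref{Th.Main2} to a monomial $v$ built from one representative of each $F_i$, and note that each deletion $I\setminus x_i$ is again a transversal polymatroidal ideal in fewer variables), but you leave the central step open, and that is a genuine gap. The point you are missing is that square-freeness of $I=\mathfrak{p}_{F_1}\cdots\mathfrak{p}_{F_r}$ forces the sets $F_1,\ldots,F_r$ to be pairwise disjoint: if $j\in F_i\cap F_k$ with $i\neq k$, then the product $x_j^2\prod_{l\neq i,k}x_{j_l}$ (choosing $x_j$ from both $F_i$ and $F_k$) is a generator of degree $r$, and since every generator of $I$ has degree $r$, it is a minimal generator, contradicting square-freeness. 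Once the $F_i$ are disjoint, everything you flag as ``the main obstacle'' becomes immediate: $v:=x_{j_1}\cdots x_{j_r}$ with $j_i\in F_i$ is automatically square-free, and since $v$ contains exactly one variable from each $F_i$, we have $v\in\mathfrak{p}_{F_i}\setminus\mathfrak{p}_{F_i}^2$ for every $i$. This is exactly how the paper's proof runs; your proposal stalls precisely where this observation is needed, and you even suggest an ``additional hypothesis'' might be required, which it is not.

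A second, related error: your description of $\mathrm{Min}(I)$ as the minimal transversals of the hypergraph $\{F_1,\ldots,F_r\}$ is wrong. Since $I$ is a product of primes, a prime contains $I$ if and only if it contains one of the factors, so $\mathrm{Min}(I)$ consists of the minimal members of $\{\mathfrak{p}_{F_1},\ldots,\mathfrak{p}_{F_r}\}$ (all of them, in the disjoint case); the minimal-transversal description would apply to the ideal generated by the monomials $\prod_{j\in F_i}x_j$, not to the product of the primes. This miscomputation is what makes the verification of $v\in\mathfrak{p}\setminus\mathfrak{p}^2$ look like a hard hypergraph statement, when in fact there are only the $r$ primes $\mathfrak{p}_{F_i}$ to check. (Your caution about the degenerate situation where $\mathfrak{m}\setminus x_i$ could be a minimal prime of $I\setminus x_i$ --- essentially the case $r=1$, $I=\mathfrak{m}$, which is trivially normally torsion-free --- is a fair point that the paper glosses over, but it does not rescue the missing disjointness argument, which is where the proof actually lives.)
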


\begin{proof}
Let $I\subset  R=K[x_1, \ldots, x_n]$ be a square-free transversal polymatroidal ideal.  Thus, we can write 
$I=\mathfrak{p}_{F_1} \cdots \mathfrak{p}_{F_r},$
where $F_1,\ldots, F_r$ is a collection of non-empty subsets of $[n]$ with $r\geq 1$ and $F_i\cap F_j=\emptyset$ for all $1\leq i\neq j \leq r$. Without loss of generality, one may assume that $\cup_{u\in \mathcal{G}(I)}\mathrm{supp}(u)=\{x_1, \ldots, x_n\}$. 
We proceed by induction on $n$. If $n=1$, then $I=(x_1)$ which is normally torsion-free.  Now, suppose that $n>1$, and that the claim has been proven for $n-1$.  Without loss of generality, we may assume that $x_i\in \mathfrak{p}_{F_i}$ for each $i=1, \ldots, r$. Set  $v:=x_1 \cdots x_r$. Note that 
$\mathrm{Min}(I)=\{\mathfrak{p}_{F_1}, \ldots, \mathfrak{p}_{F_r}\}$. It is routine to check that  $v\in \mathfrak{p}_{F_i}\setminus \mathfrak{p}^2_{F_i}$ for each $i=1, \ldots, r$. Fix $1\leq i \leq r$. Since 
$$I\setminus x_i=\mathfrak{p}_{F_1} \cdots \mathfrak{p}_{F_{i-1}} 
(\mathfrak{p}_{F_i}\setminus \{x_i\})   \mathfrak{p}_{F_{i+1}}\cdots  \mathfrak{p}_{F_r},$$ it follows from the induction  hypothesis that $I\setminus x_i$ is normally torsion-free. This leads to $\mathfrak{m}\setminus x_i \notin \mathrm{Ass}(R/(I\setminus x_i)^s)$ for all $s$ and $x_i \in \mathrm{supp}(v)=\{x_1, \ldots, x_r\}$, where $\mathfrak{m}=(x_1, \ldots, x_n)$.  By   Theorem \ref{Th.Main2},   $I$ is normally torsion-free. 
\end{proof}

%+++++++++++++++++++++++++++++++++++
%+++++++++++++++++++++++++++++++++++

 \section{Some results on the corner-elements of monomial ideals}

Let $I$ be a monomial ideal in a polynomial ring $R=K[x_1, \ldots, x_n]$ over a field $K$ and $\mathfrak{p}\in \mathrm{Ass}(R/I)$. It is well-known by \cite[Corollary 1.3.10] {HH1} that $\mathfrak{p}=(I:h)$ for some monomial $h$ in $R$. Moreover, it has been proven that if 
 $I\subset R=K[x_1, \ldots, x_n]$ is  a square-free monomial ideal  and $\mathfrak{p}\in \mathrm{Ass}(R/I)$, then there exists a square-free  monomial $h$ in  $R$ with $\mathrm{supp}(h)\subseteq \cup_{u\in \mathcal{G}(I)}\mathrm{supp}(u)$ such that $\mathfrak{p}=(I:_R h)$, see \cite[Theorem 3.1]{KHN2}.  In this section, we deduce 
 which variables must divide   $h$. To  accomplish this, we begin  with the following proposition.

\begin{proposition} \label{Pro.Corner.1}
Let $I$ be a monomial ideal in a polynomial ring $R=K[x_1, \ldots, x_n]$ over a field $K$. Let $\mathfrak{p},   \mathfrak{q} \in \mathrm{Ass}(R/I^t)$ with $\mathfrak{p}\neq \mathfrak{q}$ such that $\mathfrak{p}=(I^t:f)$ and $\mathfrak{q}=(I^t:g)$ for some positive integer $t$ and some monomials $f$ and $g$ in $R$. Then $f\nmid g$ and $g\nmid f$.  
\end{proposition}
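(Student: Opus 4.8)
The plan is to argue by contradiction, exploiting the fact that the colon ideals $(I^t : f)$ and $(I^t : g)$ are equal exactly when $f$ and $g$ "see" the same obstruction to membership in $I^t$. First I would assume, toward a contradiction, that $f \mid g$; the case $g \mid f$ is symmetric. Write $g = f h$ for some monomial $h \in R$. Since $\mathfrak{q} = (I^t : g)$, for every monomial $m \in \mathfrak{q}$ we have $mg = mfh \in I^t$, which says $mh \in (I^t : f) = \mathfrak{p}$. In other words, multiplication by $h$ carries $\mathfrak{q}$ into $\mathfrak{p}$, i.e. $h\,\mathfrak{q} \subseteq \mathfrak{p}$.

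Next I would invoke primeness of $\mathfrak{p}$. From $h\,\mathfrak{q} \subseteq \mathfrak{p}$ and $\mathfrak{p}$ prime we get either $h \in \mathfrak{p}$ or $\mathfrak{q} \subseteq \mathfrak{p}$. I would like to rule out the first alternative: if $h \in \mathfrak{p} = (I^t : f)$, then $hf = g \in I^t$, whence $(I^t : g) = R$, contradicting that $\mathfrak{q}$ is a proper (prime) ideal. Hence $\mathfrak{q} \subseteq \mathfrak{p}$. To get the reverse containment, I would run the analogous computation starting from $\mathfrak{p} = (I^t : f)$: for any monomial $m \in \mathfrak{p}$, $mf \in I^t$, and multiplying by $h$ gives $mfh = mg \in I^t$, so $m \in (I^t : g) = \mathfrak{q}$; thus $\mathfrak{p} \subseteq \mathfrak{q}$. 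Combining, $\mathfrak{p} = \mathfrak{q}$, contradicting the hypothesis $\mathfrak{p} \neq \mathfrak{q}$. A symmetric argument handles $g \mid f$, so neither divisibility can hold.

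The one point requiring a little care — and the step I expect to be the main (minor) obstacle — is justifying that it suffices to test the colon-ideal containments on monomials: this is standard for monomial ideals (colon ideals of monomial ideals by monomials are again monomial, and a monomial ideal is determined by the monomials it contains), and one can either cite \cite[Corollary 1.3.10]{HH1} or simply note that $(I^t : f)$ and $(I^t:g)$ are monomial ideals so equality can be checked on monomial generators. Everything else is a direct manipulation of divisibility of monomials together with the primeness of $\mathfrak{p}$ and $\mathfrak{q}$, so no induction or structural result beyond what is already in the preliminaries is needed.
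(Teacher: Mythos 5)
Your proof is correct and is essentially the paper's argument: the paper assumes $f\mid g$, writes $g=fv$, and uses $(I^t:fv)=((I^t:f):v)$ together with the dichotomy $(\mathfrak{p}:v)=R$ or $(\mathfrak{p}:v)=\mathfrak{p}$ for a prime $\mathfrak{p}$, which is exactly what your element-wise computation of $h\mathfrak{q}\subseteq\mathfrak{p}$ and $\mathfrak{p}\subseteq\mathfrak{q}$ unpacks. Your worry about reducing to monomials is unnecessary (the containments $mg\in I^t\Rightarrow mh\in\mathfrak{p}$, etc.\ hold for arbitrary ring elements), but it is harmless.
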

\begin{proof}
We need only show that  $f\nmid g$, since the other claim can be proven  by a similar argument. Suppose, on the contrary, that $f\mid g$. That is, $g=fv$ for some monomial $v$ in $R$.   This gives that $\mathfrak{q}=(I^t:fv)$. Since $(I^t:fv)=((I^t:f):v)$, one gains that   $\mathfrak{q}=(\mathfrak{p}:v)$. It is easy to see that either 
$(\mathfrak{p}:v)=R$ or  $(\mathfrak{p}:v)=\mathfrak{p}$. 
We thus have either $\mathfrak{q}=R$ or $\mathfrak{q}=\mathfrak{p}$. This leads to a contradiction. Therefore, we get  $f\nmid g$, as claimed. 
\end{proof}

%+++++++++++++++++++++++++++++++++++
%+++++++++++++++++++++++++++++++++++

 To establish Proposition \ref{Pro.Corner.2} and Corollary \ref{Main.Corollary2},  one has to  apply the notion of  corner-elements which were first introduced in \cite{HRS}. We recall it in the following definition.

 \begin{definition} 
   Let  $R = K[x_1, \ldots, x_n]$ be a polynomial ring, and $I$ be an ideal of $R$. A monomial $z$ in $R$ is called  an {\it I-corner-element} if $z\notin I$ and $x_iz\in I$ for each $i=1, \ldots, n$.   
 \end{definition}
%+++++++++++++++++++++++++++++++++++
%+++++++++++++++++++++++++++++++++++

\begin{proposition} \label{Pro.Corner.2}
 Let $I\subset R=K[x_1, \ldots, x_n]$ be a  monomial ideal and  $\mathfrak{m}=(x_1, \ldots, x_n)$. Let $f$ and $g$ be two $I^t$-corner-elements for some positive integer $t$. Then 
$f\nmid g$ and  $g\nmid f$. 
\end{proposition}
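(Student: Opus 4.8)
The plan is to reduce the statement to the previously established Proposition~\ref{Pro.Corner.1}. The key observation is that an $I^t$-corner-element $z$ gives rise to an associated prime of $I^t$ in a canonical way: if $z$ is an $I^t$-corner-element, then $z\notin I^t$ while $x_iz\in I^t$ for all $i$, so $\mathfrak{m}\subseteq (I^t:_R z)$, and since $z\notin I^t$ the colon ideal $(I^t:_R z)$ is a proper ideal containing $\mathfrak{m}$; hence $(I^t:_R z)=\mathfrak{m}$. In particular $\mathfrak{m}\in\mathrm{Ass}(R/I^t)$ and $\mathfrak{m}=(I^t:_R z)$.

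First I would spell out this computation for both corner-elements: writing $\mathfrak{m}=(I^t:_R f)$ and $\mathfrak{m}=(I^t:_R g)$. Then the natural move is to invoke Proposition~\ref{Pro.Corner.1} — but note that proposition requires two \emph{distinct} associated primes $\mathfrak{p}\neq\mathfrak{q}$, whereas here both colon ideals equal $\mathfrak{m}$. So a direct citation does not apply, and I would instead rerun the short argument of Proposition~\ref{Pro.Corner.1} in this degenerate setting. Suppose for contradiction that $f\mid g$, say $g=fv$ for some monomial $v$ in $R$. Then $\mathfrak{m}=(I^t:_R g)=(I^t:_R fv)=((I^t:_R f):_R v)=(\mathfrak{m}:_R v)$. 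Now either $v\in\mathfrak{m}$, in which case $(\mathfrak{m}:_R v)=R\neq\mathfrak{m}$, a contradiction; or $v\notin\mathfrak{m}$, meaning $v$ is a unit (a nonzero constant), so $g$ and $f$ differ by a unit and are the same monomial up to scalar. In that last case $f\mid g$ \emph{and} $g\mid f$, which is fine in itself, so I need to rule it out separately: but then the statement $f\nmid g$ could actually fail. Hence I expect the intended reading is that $f$ and $g$ are distinct corner-elements (as monomials), and I would state that hypothesis explicitly or observe that the conclusion is vacuous when $f=g$.

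The symmetric claim $g\nmid f$ follows by interchanging the roles of $f$ and $g$ in the argument above, so no separate work is needed. The main (and essentially only) obstacle is the bookkeeping at the boundary case $v$ a unit: one must be careful that ``$f\nmid g$'' is understood in the sense of proper divisibility, i.e. for distinct monomials, otherwise the statement needs the explicit hypothesis $f\neq g$. Modulo that clarification, the proof is a two-line reduction: (i) every corner-element of $I^t$ realizes $\mathfrak{m}$ as a colon ideal $(I^t:_R z)$, and (ii) apply the colon-ideal manipulation of Proposition~\ref{Pro.Corner.1}.
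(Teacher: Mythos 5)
Your proof follows essentially the same route as the paper: show that each corner-element $z$ satisfies $\mathfrak{m}=(I^t:_R z)$ because $\mathfrak{m}\subseteq (I^t:_R z)$ and $z\notin I^t$, then assume $f\mid g$, write $g=fv$, and derive the contradiction $\mathfrak{m}=(\mathfrak{m}:_R v)=R$. The only difference is your explicit treatment of the boundary case where $v$ is a unit (i.e.\ $f=g$), which the paper passes over by implicitly reading ``two $I^t$-corner-elements'' as distinct monomials; your observation that the statement needs that hypothesis (or proper divisibility) is a legitimate and worthwhile clarification, not a gap.
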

\begin{proof}
Since $f$ and $g$ are two  $I^t$-corner-elements for some positive integer $t$, this implies that $f\notin I^t$ (respectively, $g\notin I^t$) and $x_if\in I^t$ (respectively, $x_ig\in I^t$) for each $i=1, \ldots, n$. We thus have $x_i\in (I^t:f)$ (respectively, $x_i\in (I^t:g)$) for each $i=1, \ldots, n$. 
Accordingly, $\mathfrak{m}\subseteq (I^t:f)$ (respectively, $\mathfrak{m}\subseteq (I^t:g)$). Because 
$f\notin I^t$ (respectively, $g\notin I^t$), this yields that $\mathfrak{m}=(I^t:f)$ (respectively, $\mathfrak{m}=(I^t:g)$). On the contrary, assume that $f\mid g$. This gives that $g=fh$ for some monomial $h$ in $R$. We thus get $\mathfrak{m}=((I^t:f):v)$, and so $\mathfrak{m}=R$, a contradiction. A similar discussion shows that $g\nmid f$, and the proof is complete.
\end{proof}

%+++++++++++++++++++++++++++++++++++
%+++++++++++++++++++++++++++++++++++
The following lemma will be useful in the proof of  Corollary \ref{Main.Corollary2}. 

\begin{lemma}\label{Lemma.Corner}
Let $I$ be a monomial ideal in a polynomial ring $R=K[x_1, \ldots, x_n]$ over a field $K$.  Let  
$\mathfrak{p}=(I^t:h)$ for some positive integer $t$ and some monomial $h$ in $R$ such that  $x_i\nmid h$ for some $1\leq i \leq n$. Then $\mathfrak{p}\setminus x_i=((I\setminus x_i)^t:h)$, and so  $\mathfrak{p}\setminus x_i \in \mathrm{Ass}(I\setminus x_i)^t$.
\end{lemma}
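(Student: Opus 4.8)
The plan is to reduce the claimed equality $\mathfrak{p}\setminus x_i=\bigl((I\setminus x_i)^t:h\bigr)$ to two elementary facts and then obtain the membership in $\mathrm{Ass}$ formally. The two facts are: (1) deletion commutes with powering, $I^t\setminus x_i=(I\setminus x_i)^t$; and (2) for every monomial ideal $J$ and every monomial $h$ with $x_i\nmid h$, the identity $(J:h)\setminus x_i=(J\setminus x_i:h)$ holds. Both I would prove by comparing monomials, using throughout the basic description of deletion for a monomial ideal $J$: the ideal $J\setminus x_i$ is generated by those $u\in\mathcal{G}(J)$ with $x_i\nmid u$, and hence a monomial $m$ lies in $J\setminus x_i$ as soon as $m\in J$ and $x_i\nmid m$, since $m$ is then a multiple of some $u\in\mathcal{G}(J)$ and $u\mid m$ forces $x_i\nmid u$.

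For (1): every minimal generator of $(I\setminus x_i)^t$ is a product $u_{j_1}\cdots u_{j_t}$ of minimal generators of $I$ none of which is divisible by $x_i$; this product lies in $I^t$ and is coprime to $x_i$, hence lies in $I^t\setminus x_i$. Conversely, a minimal generator $w$ of $I^t\setminus x_i$ is a monomial of $I^t$ coprime to $x_i$, hence a multiple of some product $u_{j_1}\cdots u_{j_t}$ of generators of $I$; since this product divides $w$ and $x_i\nmid w$, each $u_{j_\ell}$ is coprime to $x_i$, so $u_{j_1}\cdots u_{j_t}\in(I\setminus x_i)^t$ and therefore $w\in(I\setminus x_i)^t$. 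For (2): the inclusion $\subseteq$ is immediate, because a monomial $m\in(J:h)$ with $x_i\nmid m$ satisfies $mh\in J$ and $x_i\nmid mh$, so $mh\in J\setminus x_i$ and $m\in(J\setminus x_i:h)$. For $\supseteq$ I would take a monomial $m$ with $mh\in J\setminus x_i$ and write $m=x_i^am'$ with $x_i\nmid m'$; the generator $n\in\mathcal{G}(J)$ dividing $mh$ satisfies $x_i\nmid n$, so $n$ already divides $m'h$, giving $m'h\in J$, i.e. $m'\in(J:h)$ with $x_i\nmid m'$, whence $m=x_i^am'\in(J:h)\setminus x_i$.

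Combining (2) with $J=I^t$ and then (1) yields $\mathfrak{p}\setminus x_i=(I^t:h)\setminus x_i=(I^t\setminus x_i:h)=\bigl((I\setminus x_i)^t:h\bigr)$, which is the first assertion. For the second assertion, $\mathfrak{p}$ is a monomial prime, say $\mathfrak{p}=(x_j:j\in F)$, so $\mathfrak{p}\setminus x_i=(x_j:j\in F\setminus\{i\})$ is again a monomial prime; being of the form $\bigl((I\setminus x_i)^t:h\bigr)$, it lies in $\mathrm{Ass}(R/(I\setminus x_i)^t)$ by definition of an associated prime. The only delicate point is the $\supseteq$ half of (2) — and, similarly, the second half of (1): the monomial $m$ is not assumed coprime to $x_i$, so it cannot be matched against generators of the deletion directly; factoring off its $x_i$-part first is exactly what makes the divisibility bookkeeping work, and the rest is routine.
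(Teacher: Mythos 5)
Your proof is correct, but it is organized differently from the paper's. The paper establishes the equality $\mathfrak{p}\setminus x_i=((I\setminus x_i)^t:h)$ by two inclusions tailored to the prime $\mathfrak{p}$: for $\subseteq$ it tests the variables $x_j$ of $\mathfrak{p}\setminus x_i$, quoting the identity $I^t\setminus x_i=(I\setminus x_i)^t$ without proof, and for $\supseteq$ it argues by contradiction, writing a monomial $v\in((I\setminus x_i)^t:h)$ outside $\mathfrak{p}\setminus x_i$ as $v=x_i^{\ell}a$ with $a\notin\mathfrak{p}$ and invoking the colon lemma (Lemma \ref{Kaplansky}) to strip off $x_i^{\ell}$, so the primality of $\mathfrak{p}$ is used throughout the equality itself. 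You instead decompose the statement into two general commutation identities, $I^t\setminus x_i=(I\setminus x_i)^t$ and $(J:h)\setminus x_i=(J\setminus x_i:h)$ for any monomial ideal $J$ and any monomial $h$ with $x_i\nmid h$, prove both by direct monomial divisibility (your factoring $m=x_i^{a}m'$ does by hand exactly the work that Lemma \ref{Kaplansky} does for the paper), and then obtain the equality purely formally, using primality only at the very end to conclude $\mathfrak{p}\setminus x_i\in\mathrm{Ass}(R/(I\setminus x_i)^t)$. What your route buys: it supplies a proof of the deletion-commutes-with-powers fact that the paper merely asserts, it avoids the external lemma, and the identity $(J:h)\setminus x_i=(J\setminus x_i:h)$ holds with no primality hypothesis, so your argument is slightly more general and self-contained; the paper's proof is shorter on the page precisely because it leans on those two borrowed ingredients. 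Your handling of the one delicate point (a monomial of a deletion need not be coprime to $x_i$, hence the factorization before matching against generators) is exactly right.
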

\begin{proof}
 We show that   $\mathfrak{p}\setminus x_i =((I\setminus x_i)^t:h)$. To do this, we first prove that 
$\mathfrak{p}\setminus x_i \subseteq ((I\setminus x_i)^t:h)$.
 To see this, take $x_j \in \mathfrak{p}\setminus x_i$. Since 
 $\mathfrak{p}\setminus x_i \subseteq \mathfrak{p}$, this implies that $x_j \in \mathfrak{p}$, and so $x_jh\in I^t$. Because $x_j\neq x_i$ and $x_i\nmid h$, one can conclude that $x_i\nmid x_jh$. Hence, $x_jh\in I^t\setminus x_i$. 
 Since  $I^t\setminus x_i=(I\setminus x_i)^t$, we get $x_jh \in (I\setminus x_i)^t$, and thus $x_j \in ((I\setminus x_i)^t:h)$.  We therefore have 
 $\mathfrak{p}\setminus x_i \subseteq ((I\setminus x_i)^t:h)$. Conversely, let $v\notin \mathfrak{p}\setminus x_i$. It suffices to show  that $v\notin ((I\setminus x_i)^t:h)$. Suppose, on the contrary, that  
 $v\in ((I\setminus x_i)^t:h)$. Then 
  $vh\in (I\setminus x_i)^t$. Since $v\in \mathfrak{p}$ and 
$v\notin \mathfrak{p}\setminus x_i$, one must have $v=x^\ell_ia$ for some positive integer $\ell$ and some monomial $a\notin \mathfrak{p}$. Hence, $x^\ell_iah\in (I\setminus x_i)^t$, and so $ah\in ((I\setminus x_i)^t:x^\ell_i)$.  It follows readily from  Lemma \ref{Kaplansky}  that $ah\in (I\setminus x_i)^t$. Since  $I\setminus x_i \subseteq I$, we obtain that $ah\in I^t$, and so $a\in (I^t:h)$. Accordingly,  one has $a\in \mathfrak{p}$, 
which is a contradiction. We  thus have $v\notin ((I\setminus x_i)^t:h)$, and so 
$((I\setminus x_i)^t:h) \subseteq \mathfrak{p}
\setminus x_i$. Consequently,  
$\mathfrak{p}\setminus x_i=((I\setminus x_i)^t:h)$, and hence $\mathfrak{p}\setminus x_i \in \mathrm{Ass}(I\setminus x_i)^t$.
\end{proof}
%%%%%%%%%%%%%%%%%%%%%%%%%%%
%%%%%%%%%%%%%%%%%%%%%%%%%%%

We are ready to express one of the main results of this section in the subsequent corollary.

\begin{corollary}
Let $I$ be a monomial ideal in a polynomial ring $R=K[x_1, \ldots, x_n]$ over a field $K$.  If  
$\mathfrak{p}=(I^t:h)$ for some positive integer $t$ and some monomial $h$ in $R$ such that  $\mathfrak{p}\setminus x_i \notin \mathrm{Ass}(I\setminus x_i)^t$ for some $1\leq i \leq n$, then $x_i\mid h$.
\end{corollary}
%%%%%%%%%%%%%%%%%%%%%%%%%%%
%%%%%%%%%%%%%%%%%%%%%%%%%%%

We are in a position to state the other main result of this section on the corner-elements of monomial ideals in the following corollary.

\begin{corollary}\label{Main.Corollary2}
Let $I$ be a monomial ideal in a polynomial ring $R=K[x_1, \ldots, x_n]$ over a field $K$ and $z$ be an $I^t$-corner-element  for some positive integer $t$  such that  $\mathfrak{m}\setminus x_i \notin \mathrm{Ass}(I\setminus x_i)^t$ for some $1\leq i \leq n$. Then $x_i\mid z$.
\end{corollary}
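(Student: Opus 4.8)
The plan is to deduce Corollary~\ref{Main.Corollary2} directly from the previous corollary (the one asserting that if $\mathfrak{p}=(I^t:h)$ and $\mathfrak{p}\setminus x_i\notin\mathrm{Ass}(I\setminus x_i)^t$ then $x_i\mid h$), together with the elementary observation made in the proof of Proposition~\ref{Pro.Corner.2} that an $I^t$-corner-element $z$ satisfies $\mathfrak{m}=(I^t:z)$. So first I would recall that, by definition, $z\notin I^t$ while $x_jz\in I^t$ for every $j=1,\ldots,n$; hence $x_j\in(I^t:z)$ for all $j$, giving $\mathfrak{m}\subseteq(I^t:z)$, and since $z\notin I^t$ the colon ideal is proper, so $\mathfrak{m}=(I^t:z)$.

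Next I would apply the preceding corollary with $\mathfrak{p}=\mathfrak{m}$ and $h=z$: the hypothesis $\mathfrak{m}\setminus x_i\notin\mathrm{Ass}(I\setminus x_i)^t$ is exactly what is needed, and the conclusion is $x_i\mid z$. That is the whole argument; it is essentially a one-line corollary once the corner-element is recast as a colon representation of $\mathfrak{m}$.

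I do not anticipate any genuine obstacle here — the only point requiring a moment's care is checking that $\mathfrak{m}$ really is an \emph{associated} prime in the sense needed to invoke the earlier corollary, i.e. that $\mathfrak{m}=(I^t:z)$ with $z$ a monomial, which is immediate from the corner-element definition; the earlier corollary is stated for an arbitrary $\mathfrak{p}$ of the form $(I^t:h)$, so no primality verification beyond $\mathfrak{m}$ being prime is even required. Thus the proof reads: \emph{Since $z$ is an $I^t$-corner-element, $z\notin I^t$ and $x_jz\in I^t$ for all $j$, so $\mathfrak{m}=(I^t:z)$; now the preceding corollary applied to $\mathfrak{p}=\mathfrak{m}$ and $h=z$ yields $x_i\mid z$.}
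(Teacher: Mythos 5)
Your proposal is correct and is exactly the deduction the paper intends: the corner-element condition gives $\mathfrak{m}=(I^t:z)$ (as already observed in the proof of Proposition~\ref{Pro.Corner.2}), and the preceding corollary (itself the contrapositive of Lemma~\ref{Lemma.Corner}) applied with $\mathfrak{p}=\mathfrak{m}$ and $h=z$ yields $x_i\mid z$. No gaps; this matches the paper's (implicit) argument.
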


%%%%%%%%%%%%%%%%%%%%%%%%%
%%%%%%%%%%%%%%%%%%%%%%%%%
The following question posed by Rajaee, Nasernejad, and Al-Ayyoub in \cite{RNA} asks if  Lemma \ref{Lemma.Corner} can be strengthened further.
\begin{question}\label{Question1}
\cite[Question 4.13]{RNA} 
Suppose that  $I$ is   a square-free  monomial ideal in  $R=K[x_1, \ldots, x_n]$, $\mathcal{G}(I)=\{u_1, \ldots, u_m\}$, $\bigcup_{i=1}^m\mathrm{supp}(u_i)=\{x_1, \ldots, x_n\}$, and  
  $\mathfrak{m}=(x_1, \ldots, x_n)$ is the graded maximal ideal of $R$. If there exists a positive integer  $1\leq j \leq n$ such that  
  $\mathfrak{m}\setminus x_j\in \mathrm{Ass}(R/(I\setminus x_j)^k)$  for some positive integer $k$, then can we deduce that  $\mathfrak{m}\in \mathrm{Ass}_R(R/I^k)$?   
\end{question}

We provide a counterexample. To do this, let $G=(V(G), E(G))$ be the graph with the vertex set $V(G)=\{1, 2, 3, 4, 5, 6\}$, and the edge set 
$$E(G)=\{\{1,2\}, \{2,3\}, \{3,4\}, \{4,5\}, \{5,1\},\{1,6\},\{5,6\}, \{4,6\}\}.$$ 
Assume that $I$ is the cover ideal of $G$, that is,
$$I=(x_1,x_2) \cap  (x_2,x_3) \cap  (x_3,x_4) \cap 
 (x_4,x_5) \cap  (x_5,x_1) \cap (x_1,x_6) \cap  (x_5,x_6) 
 \cap  (x_4,x_6).$$
  By using Macaulay2 \cite{GS},  one can derive that 
 $(x_1, x_2, x_3, x_4, x_5)\in \mathrm{Ass}(R/(I\setminus x_6)^2)$, while   $(x_1, x_2, x_3, x_4, x_5, x_6)\notin \mathrm{Ass}(R/I^2)$.
\\
%%%%%%%%%%%%%%%%%%%%%%%%%
%%%%%%%%%%%%%%%%%%%%%%%%%

The proposition below says that if $\mathfrak{p}=(I^s:h)\in \mathrm{Ass}(R/I^s)$ for some positive integer $s$, then we can always find an upper bound for $\mathrm{deg}_{x_i}h$ for each $i$.

\begin{proposition}\label{Pro.Corner.3}
Let $I$ be a square-free monomial ideal in a polynomial ring $R=K[x_1, \ldots, x_n]$ over a field $K$.  Let   
$\mathfrak{p}=(I^s:h)\in \mathrm{Ass}(R/I^s)$ for some positive integer $s$ and some monomial $h$ in $R$. Then $\mathrm{deg}_{x_i}h\leq s-1$ for each $i=1, \ldots, n$. 
\end{proposition}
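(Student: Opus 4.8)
The plan is to argue by contradiction: suppose $\deg_{x_i} h \geq s$ for some index $i$, say $i=1$ after relabeling, and write $h = x_1^{a} h'$ with $a \geq s$ and $x_1 \nmid h'$. I want to show that one can strip off a factor of $x_1$ from $h$ without changing the colon ideal, contradicting a minimality-of-$h$ choice or leading directly to $\mathfrak{p} = R$. More precisely, I would first reduce to a convenient representative: by \cite[Corollary~1.3.10]{HH1} (or Proposition~\ref{reduction}) we may choose $h$ so that $\mathrm{supp}(h) \subseteq \bigcup_{u \in \mathcal{G}(I)} \mathrm{supp}(u)$, and among all monomials $h$ with $\mathfrak{p} = (I^s :_R h)$ we may pick one of minimal total degree. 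The goal is then to show this minimal $h$ has $\deg_{x_i} h \leq s-1$ for every $i$.

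The key step is the following divisibility observation. Suppose $\deg_{x_1} h = a \geq s$. I claim $(I^s :_R h) = (I^s :_R h/x_1)$, which contradicts minimality of $\deg h$ (note $h/x_1$ is still a monomial since $a \geq 1$). The inclusion $(I^s :_R h/x_1) \subseteq (I^s :_R h)$ is automatic. For the reverse, take a monomial $w \in (I^s :_R h)$, so $wh \in I^s$; then there exist $g_1,\dots,g_s \in \mathcal{G}(I)$ with $g_1 \cdots g_s \mid wh$. Since $I$ is square-free, each $g_j$ is a square-free monomial, so $\deg_{x_1}(g_1\cdots g_s) \leq s \leq a = \deg_{x_1} h$. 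This means that in the division $g_1 \cdots g_s \mid wh$, the $x_1$-part of $g_1 \cdots g_s$ is already entirely accounted for by the $x_1$-part of $h$, i.e. $g_1 \cdots g_s \mid w \cdot (h/x_1) \cdot x_1^{?}$ — more carefully: writing $g_1\cdots g_s = x_1^b m$ with $x_1 \nmid m$ and $b \leq s \leq a$, from $x_1^b m \mid w h = w x_1^a h'$ we get $m \mid w h'$ and $b \leq a$, hence $x_1^b m \mid w x_1^{a-1} h' = w(h/x_1)$, giving $w(h/x_1) \in I^s$, i.e. $w \in (I^s :_R h/x_1)$. This establishes the claimed equality and the contradiction.

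The main obstacle, as usual with such square-free arguments, is making the combinatorial bookkeeping on exponents rigorous: one must be careful that ``$g_1 \cdots g_s \mid wh$'' is the right membership criterion for $wh \in I^s$ (it is, since $I^s$ is generated by the products $g_1 \cdots g_s$ over $s$-tuples from $\mathcal{G}(I)$, up to a monomial multiple), and that the exponent inequality $\deg_{x_1}(g_1 \cdots g_s) \leq s$ genuinely uses square-freeness of $I$ and nothing more. Once the equality $(I^s :_R h) = (I^s :_R h/x_1)$ is in hand, the proof is immediate: minimality of $\deg h$ forces $\deg_{x_i} h \leq s-1$ for all $i$, since otherwise we could produce a strictly smaller representative. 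One could alternatively phrase this as a direct (non-contradiction) argument — repeatedly divide $h$ by $x_i$ while $\deg_{x_i} h \geq s$ — but the minimal-degree formulation is cleanest to write.
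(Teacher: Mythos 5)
There is a genuine gap, and it sits exactly at the boundary case your argument has to handle. Your key claim is that $\deg_{x_1}h=a\geq s$ forces $(I^s:h)=(I^s:h/x_1)$. In the verification you write $g_1\cdots g_s=x_1^{b}m$ with $x_1\nmid m$ and $b\leq s\leq a$, and from $x_1^{b}m\mid wx_1^{a}h'$ you conclude $x_1^{b}m\mid wx_1^{a-1}h'$. That conclusion needs $b\leq a-1+\deg_{x_1}w$, while you only know $b\leq s\leq a$; it fails when $a=s$, $b=s$ and $x_1\nmid w$ --- and $a=s$ is precisely the case you must exclude to reach the bound $s-1$. This is not just bookkeeping: the claimed equality of colon ideals is false there. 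Take $I=(x_1x_2,x_1x_3)$, $s=1$, $h=x_1$ (a witness of minimal total degree, with support inside the support of $\mathcal{G}(I)$): then $(I:x_1)=(x_2,x_3)\in\mathrm{Ass}(R/I)$, whereas $(I:h/x_1)=I$ is not even prime. So your argument only rules out $\deg_{x_1}h\geq s+1$, i.e.\ it gives $\deg_{x_i}h\leq s$ for a minimal witness, one short of the assertion; moreover, in this example $\deg_{x_1}h=s>s-1$ with $x_1\notin\mathfrak{p}$, which shows the bound $s-1$ cannot be obtained by any argument that never uses the membership of the variable in $\mathfrak{p}$.

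The paper's proof goes in the opposite direction: instead of dividing $h$ by the variable, it multiplies by it. From $x_i\in\mathfrak{p}=(I^s:h)$ (this membership is the decisive ingredient, used when the paper asserts $hx_i\in I^s$) one gets $hx_i=u_1\cdots u_sf$ with $u_j\in\mathcal{G}(I)$; square-freeness gives $\deg_{x_i}(u_1\cdots u_s)\leq s$, while $\deg_{x_i}(hx_i)\geq s+1$, so $x_i\mid f$ and hence $h\in I^s$, contradicting $(I^s:h)\neq R$. The extra factor $x_i$ supplies exactly the $+1$ that your division argument is missing. Note also that the paper works with the given $h$, so no reduction to a minimal-degree witness is needed; your re-choice of $h$ would in any case prove only that some witness has small exponents, not the stated bound for the monomial realizing $\mathfrak{p}$. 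To salvage your route you would have to start from $x_ih\in I^s$ for the variable in question, at which point you are back at the paper's argument.
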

\begin{proof}
To  verify  the claim, it is enough for us to  show that 
 $h|(\prod_{i=1}^nx_i)^{s-1}$. On the contrary, assume that $h\nmid(\prod_{i=1}^nx_i)^{s-1}$. Thus,  there exists some $1\leq i \leq n$ such that $\mathrm{deg}_{x_i}h\geq s$. This implies that  the exponent of $x_i$ in $hx_i$ is at least $s+1$. By $\mathfrak{p}=(I^s:h)$, one has   $hx_i \in I^s$, and so   there exist monomials $u_1, \ldots, u_s\in \mathcal{G}(I)$  such that 
 $hx_i=u_1\cdots u_sf$ for some monomial $f$ in $R$. 
 Since  $I$ is  a square-free monomial ideal, then for each $j=1,\ldots, s$, $u_j$ is a square-free monomial, one has the exponent of $x_i$ in each $u_j$ is at most one. Accordingly, we get $x_i|f$, and hence  $h\in I^s$, which is a contradiction.  We therefore have    $h|(\prod_{i=1}^nx_i)^{s-1}$, as desired. 
\end{proof}
%%%%%%%%%%%%%%%%%%%%%%%
%%%%%%%%%%%%%%%%%%%%%%%

We close this section with the subsequent proposition which argues on the minimal prime ideals associated to  square-free monomial ideals. 

\begin{proposition}\label{Pro.Corner.4}
Let $I$ be a square-free monomial ideal in a polynomial ring $R=K[x_1, \ldots, x_n]$ over a field $K$. If  $\mathfrak{p}\in \mathrm{Min}(I)$, then $\mathfrak{p}=(I:h)$, where $h$ is the product of the variables that are not in $\mathfrak{p}$. 
\end{proposition}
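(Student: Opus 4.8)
The plan is to prove Proposition \ref{Pro.Corner.4} by writing down the candidate generator explicitly and showing it does the job. Let $\mathfrak{p} \in \mathrm{Min}(I)$; since $I$ is square-free, $\mathfrak{p} = (x_{i_1}, \ldots, x_{i_k})$ for some subset of the variables, and every minimal prime of $I$ has this form. Set $h := \prod_{x_j \notin \mathfrak{p}} x_j$, the product of the complementary variables. I want to show $\mathfrak{p} = (I :_R h)$.

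First I would establish the inclusion $\mathfrak{p} \subseteq (I :_R h)$. Take any variable $x_{i_\ell} \in \mathfrak{p}$; I must check $x_{i_\ell} h \in I$. The monomial $x_{i_\ell} h$ is the product of \emph{all} variables $x_1 \cdots x_n$ divided by nothing — more precisely it is $x_{i_\ell} \cdot \prod_{x_j \notin \mathfrak{p}} x_j$, which is divisible by $\prod_{j=1}^n x_j$ only if $\mathfrak{p}$ is generated by a single variable, so instead I argue as follows: because $\mathfrak{p} \in \mathrm{Min}(I) = \mathrm{Ass}(R/I)$ and $I$ is square-free, $I = \bigcap_{\mathfrak{q} \in \mathrm{Min}(I)} \mathfrak{q}$, and in particular every minimal generator $u \in \mathcal{G}(I)$ has $\mathrm{supp}(u) \cap \{x_{i_1}, \ldots, x_{i_k}\} \neq \emptyset$ for the minimal prime $\mathfrak{p}$... actually the cleanest route: it suffices to show $x_{i_\ell} h \in \mathfrak{q}$ for every $\mathfrak{q} \in \mathrm{Min}(I)$. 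Fix $\mathfrak{q} = (x_{j_1}, \ldots, x_{j_m}) \in \mathrm{Min}(I)$. If $\mathfrak{q} \not\subseteq \mathfrak{p}$, then some $x_{j_r} \notin \mathfrak{p}$, hence $x_{j_r} \mid h$, so $h \in \mathfrak{q}$ and thus $x_{i_\ell} h \in \mathfrak{q}$. If $\mathfrak{q} \subseteq \mathfrak{p}$, then by minimality $\mathfrak{q} = \mathfrak{p}$, so $x_{i_\ell} \in \mathfrak{q}$, giving $x_{i_\ell} h \in \mathfrak{q}$. Either way $x_{i_\ell} h \in \bigcap \mathfrak{q} = I$, so $x_{i_\ell} \in (I :_R h)$, and hence $\mathfrak{p} \subseteq (I :_R h)$.

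Next I would show the reverse inclusion $(I :_R h) \subseteq \mathfrak{p}$, which also forces $(I :_R h)$ to be prime (equal to $\mathfrak{p}$). Since $(I :_R h)$ is a monomial ideal, it is enough to show that no monomial $w$ with $\mathrm{supp}(w) \subseteq \{x_j : x_j \notin \mathfrak{p}\}$ lies in $(I :_R h)$, i.e. that $wh \notin I$ for such $w$. But $\mathrm{supp}(wh) \subseteq \{x_j : x_j \notin \mathfrak{p}\}$, so $wh \notin \mathfrak{p}$, and since $I \subseteq \mathfrak{p}$ this gives $wh \notin I$. Combined with $\mathfrak{p} \subseteq (I:_R h)$ and the general fact that $(I:_R h)$ is generated by monomials in the variables, a short argument (each monomial generator of $(I:_R h)$ either lies in $\mathfrak{p}$ or involves only variables outside $\mathfrak{p}$, and the latter is impossible by what was just shown) yields $(I :_R h) \subseteq \mathfrak{p}$. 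Therefore $(I :_R h) = \mathfrak{p}$.

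The main obstacle, such as it is, is purely bookkeeping: being careful that $h$ is nonzero (it is, since $\mathfrak{p} \neq \mathfrak{m}$ whenever $I \neq \mathfrak{m}$; if $\mathfrak{p} = \mathfrak{m}$ then $h = 1$ and the statement reads $\mathfrak{m} = (I : 1) = I$, which is consistent since then $I$ would have $\mathfrak{m}$ as its unique minimal prime, forcing $I = \mathfrak{m}$) and that the reduction to checking membership in each minimal prime is justified by $I = \bigcap_{\mathfrak{q} \in \mathrm{Min}(I)} \mathfrak{q}$, valid because $I$ is square-free. No deep input is needed beyond the square-free primary decomposition recalled in Section 2.
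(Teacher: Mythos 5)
Your proof is correct, and it differs from the paper's on the more substantive inclusion. For $(I:_R h)\subseteq \mathfrak{p}$ you argue essentially as the paper does: a monomial $w$ whose support is disjoint from $\mathfrak{p}$ would give $wh\in I\subseteq\mathfrak{p}$ while $wh\notin\mathfrak{p}$, which is absurd (the paper phrases this via $\gcd(x_j,h)=1$ forcing $x_j\mid u$). For $\mathfrak{p}\subseteq(I:_R h)$, however, the paper does not pass through the decomposition $I=\bigcap_{\mathfrak{q}\in\mathrm{Min}(I)}\mathfrak{q}$: given $x_j\in\mathfrak{p}$, it uses minimality of $\mathfrak{p}$ to find a minimal generator $u\in\mathcal{G}(I)$ not lying in the prime generated by the variables of $\mathfrak{p}$ other than $x_j$; square-freeness then forces $u\mid x_jh$, hence $x_jh\in I$. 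Your version instead checks $x_jh\in\mathfrak{q}$ prime by prime, splitting on whether $\mathfrak{q}\subseteq\mathfrak{p}$ (incomparability of minimal primes gives $\mathfrak{q}=\mathfrak{p}$, so $x_j\in\mathfrak{q}$) or not (then some variable of $\mathfrak{q}$ divides $h$). Both routes are elementary and valid; the paper's is slightly more constructive, exhibiting an actual generator of $I$ dividing $x_jh$, while yours relies only on the standard fact that a square-free monomial ideal is the intersection of its minimal primes and has the small advantage of treating the degenerate case $\mathfrak{p}=\mathfrak{m}$, $h=1$ explicitly, which the paper passes over in silence. The false start in your first paragraph (the aside about divisibility by $\prod_{j=1}^n x_j$) should simply be deleted; the case-by-case argument that follows it is the one that does the work.
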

\begin{proof}
Let $\mathfrak{p}\in \mathrm{Min}(I)$.  We first  demonstrate 
that  $(I:h) \subseteq \mathfrak{p}.$ To see this, pick a monomial $u$ in $(I:h)$. Hence, $uh\in I$, and so $uh \in \mathfrak{p}$. We thus have $x_j\mid uh$ for some $x_j \in 
\mathfrak{p}.$  Since $\mathrm{gcd}(x_j, h)=1$, one can conclude that $x_j\mid u$, and so $u\in \mathfrak{p}$. Therefore,  $(I:h) \subseteq \mathfrak{p}.$ To complete our discussion, 
we show that $\mathfrak{p}\subseteq (I:h)$. To achieve this, take an arbitrary element $x_j\in  \mathfrak{p}$.   Because 
$\mathfrak{p}\in \mathrm{Min}(I)$, this yields that $I\nsubseteq \mathfrak{p}\setminus \{x_j\}$, and so there exists some $u\in \mathcal{G}(I)$ such that $u\notin \mathfrak{p}\setminus \{x_j\}$. As $I$ is  square-free, this forces  $u\mid x_jh$, and hence $x_jh\in I$. Accordingly, one must have $x_j \in (I:h)$. We therefore get $\mathfrak{p}\subseteq (I:h)$, as required. 
\end{proof}
%%%%%%%%%%%%%%%%%%%%%%%%%%%%
%%%%%%%%%%%%%%%%%%%%%%%%%%%%
  \noindent{\bf Acknowledgments.}
The authors are deeply grateful to the  referee for careful reading of the manuscript and    valuable suggestions which led to significant  improvements in the quality of this paper.  
Mirsadegh  Sayedsadeghi  was supported by the grant of Payame Noor University of Iran.
Moreover, Mehrdad Nasernejad  was in part supported by a grant from IPM (No. 991300120).

%%%%%%%%%%%%%%%%%%%%%%%%%%%
 %%%%%%%%%%%%%%%%%%%%%%%%%%%

\end{document}